\newcommandx{\comment}[2][1=]{\todo[linecolor=red,backgroundcolor=red!25,bordercolor=red,#1]{#2}}
\newcommand{\s}{\mathbb{S}}
\newcommand{\set}{\mathbf{Set_{3}}}
\newcommand{\ms}{\mathbf{Met_3}^{S}}
\newcommand{\mc}{\mathbf{Met_3}^{C}}
\newcommand{\ml}{\mathbf{Met_3}^{L}}
\newcommand{\nn}{\mathbb{N}}
\newcommand{\al}{\alpha}
\newcommand{\mtimes}{M\otimes -}
\newcommand{\bc}{\mathbf{C}}
\newtheorem{lemma}{Lemma}
\newtheorem{remark}{Remark}
\newtheorem{proposition}{Proposition}
\newtheorem{example}{Example}
\newtheorem{theorem}{Theorem}
\providecommand{\keywords}[1]
{
  \small	
  \textbf{\textit{Keywords--}} #1
}
\newcounter{bibcount}
\patchcmd{\@lbibitem}{\item[}{\item[\hfil\stepcounter{bibcount}{\thebibcount.}}{}{}
\renewcommand\NAT@bibsetup%
\begin{document}

\title{Sierpinski Gasket as a Final Coalgebra Obtained by Cauchy Completing the Initial Algebra}
\author[1]{Jayampathy Ratnayake} 
\author[2]{Annanthakrishna Manokaran} 
\author[1]{Romaine Jayewardene} 
\affil[1]{Department of Mathematics, University of Colombo, Sri Lanka}
\affil[2]{Department of Mathematics and Statistics, University of Jaffna, Sri Lanka}
\date{}
\maketitle

\begin{abstract}
This paper presents the Sierpinski Gasket ($\s$) as a final coalgebra obtained by Cauchy completing the initial algebra for an endofunctor on the category of tri-pointed one bounded metric spaces with continuous maps. It has been previously observed that $\s$ is bi-Lipschitz equivalent to the coalgebra obtained by completing the initial algebra, where the latter was observed to be final when morphisms are restricted to short maps. This raised the question ``Is $\s$  the final coalgebra in the Lipschitz setting?''.  The results of this paper show that the natural setup is to consider all continuous functions. 

The description of the final coalgebra as the Cauchy completion of the initial algebra has been explicitly used to determine the mediating morphism from a given coalgebra to the the final coalgebra. This has been used to show that if the structure map of a coalgebra is continuous, then so is the mediating morphism. The description of $\s$ given here not only generalizes previous observations, but also unifies classical descriptions of $\s$. We also show, by means of an example, that $\s$ is not the final coalgebra if we consider only Lipschitz maps. 
\end{abstract}
\keywords{Sierpinski gasket, Final coalgebra, Initial algebra, Cauchy completion}


\section{Introduction}\label{introduction}

The Sierpinski gasket ($\mathbb{S}$) can be described in many equivalent ways. One well known description is that it is the limit of the repeated removal of a triangle from an equilateral triangle. Instead of removing a triangle, one may also describe $\s$ as the limit of gluing triangles. Specifically, given an equilateral triangle (convex hull of three equidistant points), take three copies of it, shrink the size of each triangle by a factor of $\frac{1}{2}$ and then glue three copies so that each triangle touches the other triangles at a corner (see Figure \ref{ffig1} for a similar example with a different shape). Another description of $\s$ is based on a system of iterated functions. Consider the maps $\sigma_i:\mathbb{R}^2\rightarrow \mathbb{R}^2$, for $i\in\{a,b,c\}$, given by
\begin{align*}
    \sigma_a(x,y)&=(x/2, y/2)+(1/4, \sqrt{3}/4)\\
    \sigma_b(x,y)&=(x/2, y/2)\\
    \sigma_c(x,y)&=(x/2, y/2)+(1/2,0)
\end{align*}

\noindent Then, $\s$ is the unique nonempty compact set fixed by these functions and it satisfy the property $\s =\bigcup_{i\in\{a,b,c\}} \sigma_i\left(\s\right)$ . Moreover, we have the following characterization of $\s$ due to 
\cite{Hut}. Each $\sigma_i$, for $i=a,b,c$, has a unique fixed point, $(1/2, \sqrt{3}/2)$, $(0,0)$  and $(1,0)$ respectively. Then, $\s$ is the closure of the orbit of these three points with respect to the $\sigma_i$'s. From now on we use $\s$ to refer to this set.\\

Let $\mc$ be the category of tri-pointed one bounded metric spaces with the choice of morphisms as the continuous functions preserving distinguished points. In this paper, we describe $\s$ as a final coalgebra for an endofunctor $F$ on $\mc$, that is obtained by ``Cauchy completing" an initial algebra of $F$.\\

This paper was motivated by \cite{Bhat}, \cite{moss} and \cite{Lein}. An important question that \cite{Bhat} has tried to answer, with $\s$ as a motivating example, is whether a description of self similar objects as in \cite{Lein} can be given in the context of metric spaces that is also analogous to classical results such as those in \cite{Hut}. Some important observations have already been made in \cite{Bhat} (and also in \cite{moss}). Specifically it has been shown there that $\s$ is bi-Lipschitz isomorphic to the coalgebra that is final when the morphisms are restricted to short maps and that it is obtained by ``Cauchy completing" the initial algebra. However, as the observations in \cite{Bhat} on the finality of the completion of the initial algebra  limit to the short maps and $\s$ is only bi-Lipschitz isomorphic to this coalgebra, the description falls short of giving $\s$ as a final coalgebra. Moreover, these observations prompted the question whether $\s$ is the final coalgebra when the morphisms are chosen to be Lipschitz functions.\\

In this paper, we make explicit use of the description of the final coalgebra as the Cauchy completion of the initial algebra to determine the mediating morphisms. This enables the description of $\s$ as a final coalgebra in the context of continuous functions. We also show, by means of an example,  that $\s$ is not the final coalgebra if we  consider only Lipschitz maps. Thus, it turns out that continuous maps is the natural choice of morphisms. These results not only give a universal characterization of $\s$, strengthening  observations in \cite{Bhat} and answering some questions raised there, but also enables us to unify several different ways of describing $\s$. Before going in the the main discussion in the next section, we show below how the observations here unify different descriptions of $\s$ and also point out some analogous results in the literature.\\

The explicit description of the functor $F$ and $\s$ as an $F$-coalgebra, $\sigma : \s \rightarrow F(\s)$, will be given later (Example \ref{Sierpinski}). For the moment, with abuse of notation, we will refer to this coalgebra by the carrier set $\s$.\\

From the description of the functor and that $\s$ is a final $F$-coalgebra, $\s$ is realized as a self-similar object as in \cite{Lein}, obtained by gluing three copies of itself. Moreover, the initial algebra, consisting of ``finitely described'' points, can be compared (isomorphic in $\mc$) to the orbit of the three fixed points of the three contractions $\sigma_i$. The description of $\s$ as the Cauchy completion of the initial algebra is in analogy to Hutchinson's description (\cite{Hut}) of $\s$ as the closure of the orbit of fixed points of the iterations. The initial algebra can be considered as the ``finitely described'' points, whereas its completion, the final coalgebra, captures the ``limit''. \\

There is a close relationship between the set setting and the continuous setting. Results in \cite{Bhat} already describe $\s$ as a final coalgebra in the set setting. However, it follows that the initial algebra in the set setting can be given a natural metric and the Cauchy completion of the initial algebra with respect this metric is the final coalgebra in both the set and the metric settings. This can be compared with some of the observations in \cite{Lein}. It was shown in  \cite{Lein}, that for the functors considered there (which are defined in both the set and the topological settings), the final coalgebra in the set setting carries a natural topology, which makes it the final coalgebra in the topological setting. The analogous result  in our context considers metric spaces with continuous functions instead of topological spaces.\\

We believe that due to the above reasons, the description of $\s$ as a final coalgebra in the metric setting given here is more than yet another way of describing $\s$.  It not only unifies different ways of describing $\s$ and give $\s$ a universal characterization, but also a motivating example to develop a general theory for the case of metric fractals, that is similar to the work of \cite{Lein}. Some efforts to give similar examples can be found in \cite{Bhat, Bhatpadic, Mano} as well. This paper addresses how observations in these papers can be improved to be applicable in a general context.\\

We start by recalling some results in \cite{Bhat}. However, in several cases we give a different proof or a different emphasis to the results, to support the main ideas of the paper. It is noteworthy to say that to describe the final coalgebra as the Cauchy completion, one needs to first restrict $F$ to the subcategory $\ms$, where the morphisms are short maps, as the initial algebra is not known in the case of $\mc$ . Some important observations of this paper are that given any continuous $F$-coalgebra, the unique map to $\s$ can be defined canonically by using the fact that $\s$ is the Cauchy completion of the initial algebra and that this map is continuous (Proposition \ref{defOfMediatingMor} and Proposition \ref{contOff}).\\

In Section \ref{secOnPrilm}, we will outline the basic framework, including the definitions of the categories and the functor $F$ that are being considered here. We consider the set setting in Section \ref{secOnSET} and Section \ref{secOnSet}. The metric setting is considered in Section \ref{secOnMSC}. This is where we determine the final coalgebra of $F$ on $\mc$ and relate it to $\s$. Along the way we show how the mediating morphism from a coalgebra to the final coalgebra can be obtained by the limit of a sequence obtained by iterating the coalgebra. This construction relies on the fact that the final coalgebra is the Cauchy completion of the initial algebra.  In Section \ref{examples} we consider some examples and give counter examples to show that the Lipschitz setting is not sufficient. In particular, we show that the initial algebra of $F$ on $\ms$ is not the initial algebra of $F$ on $\mc$. We still do not know whether the initial algebra of $F$ on  $\mc$ exists.  We also show that the final coalgebra  and the initial algebra  of $F$  on $\ms$ are not the final coalgebra and the initial algebra of $F$ on $\ml$, subcategory of $\ms$ where the morphisms are Lipschitz maps.  In the case of $F$ on $\ml$, we do not know whether the final coalgebra or the initial algebra exist. 

\section{Preliminaries}
\label{secOnPrilm}

A tri-pointed set is a set having three distinguished elements. Such a set can be identified by a quadruple  $(X,T,L,R)$, consisting of a set $X$ and three distinguished elements $T, L$ and $R$ of $X$. When it is clear from the context, we often omit the distinguished points from the description of the set. We use subscripts to differentiate distinguished points of two tri-pointed sets $X$ and $Y$. For example, $T_X$ and $T_Y$ for $T$ of $X$ and of $Y$ respectively. There is the category of tri-pointed sets, $~\mathbf{Set_{3}}$, whose objects are tri-pointed sets and morphisms are functions which preserve the distinguished elements.\\ 

A tri-pointed metric space $(X, d)$ is a tri-pointed set $(X, T_X, L_X, R_X)$ equipped with a one-bounded metric $d$ (i.e.,  $d(x,y) \leq 1 \,\,\, \forall\, x,y\in X$), such that the distance between any pair of distinguished elements is 1. The class of tri-pointed metric spaces can be raised to categories $\ms$, $\ml$ and $\mc$, where the morphisms are respectively short maps, Lipschitz maps and continuous maps that preserve the distinguished elements. Note that the subscript $3$ emphasizes that the sets are equipped with three distinguished points and the superscript indicates the choice of morphisms. Note that  $\ml$ and $\ms$ are subcategories of $\mc$,  $\ms$ is a subcategory of $\ml$, and there is a forgetful functor from these categories ($\ms$, $\ml$ and $\mc$) to $\set$.\\

The functor that is central to the discussion of this paper is defined as follows. These definitions were motivated by \cite{Lein} and have been considered previously in \cite{Bhat} and \cite{moss}. We first define $F$ on $\set$.\\ 

Fix the set $M=\{a,b,c\}$. Given a tri-pointed set $X$, $M \times X$ consists of three copies of $X$ labeled by $a,\,b$ and $c$. Let $\sim$ be the equivalence relation on  $M \times X$ generated by the relations $(b,T)\sim (a,L)$, $(a,R)\sim (c,T)$ and $(c,L) \sim (b,R)$. Let $M \otimes X$ be the set of equivalence classes $M \times X/\sim$, with respect to $\sim$. We will denote the equivalence  class of an element $(m, x)\in M \times X$ by $m\otimes x$. The set $M\otimes X$ is lifted to a tri-pointed set by choosing $a\otimes T$, $b\otimes L$ and  $c\otimes R$ as $T_{M\otimes X}$, $L_{M\otimes X}$ and $R_{M\otimes X}$ respectively. We set  $F( X) = M \otimes X$.\\

This description identifies (glues) certain points of the three copies of $X$. For example the relation $(b,T)~\sim(a,L)$ identifies $L$ of copy $a$ with $T$ of copy $b$, and similarly for other relations, as shown in Figure \ref{ffig1} (see also \cite{Bhat}). Finally, the top element of the top triangle, left element of the left triangle and right element of the right triangle were taken as the three distinguished points, making it an object of $\set$.\\

\begin{figure}[h!]
\centering
\includegraphics[scale=0.5]{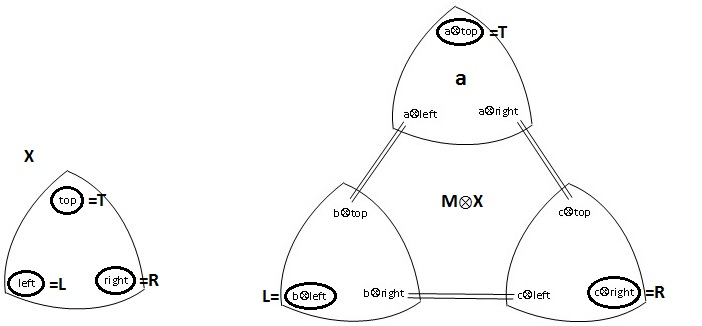}
\caption{Description of $F(X)=M\otimes X$}
\label{ffig1}
\end{figure}

The action of $F$ on morphisms is as follows. Given a morphism $f:X\rightarrow Y$ of tri-pointed sets, $F(f) : F(X)=M\otimes X\rightarrow  F(Y)=M\otimes Y$ is given by $F(m\otimes x)=m\otimes f(x)$. It is an easy exercise to show that $F(f)$ is well defined and preserves the distinguished elements. Hence, $F(f)$ is a morphism in $\set$.\\

Based on the description of $F$ on objects, if we may chose to write $F(f)= M\otimes f$. With this suggestion, $M\otimes -$ is an alternative notation for $F$. The definition of $\mtimes$ can be extended to define endofunctors on $\ms$,   $\ml$ and $\mc$, which are also designated by $F=\mtimes$. First, for a given tri-pointed metric space $(X,d)$, $M\times X$ is given the metric defined as follows.
	$$d\left((m,x),(n,y)\right)=\left\{%
	\begin{array}{ll}
	\frac{1}{2}d(x,y), & \hbox{$m=n$;} \\\\
	1, & \hbox{$m\neq n$.}
	\end{array}%
	\right.$$ 

We now consider the quotient metric on $M\otimes X$. Though the quotient metric in general is only a pseudometric, in our case it is indeed a metric and the distance between two elements can be computed explicitly as follows. 
	
  \begin{lemma}[Lemma 7, \cite{Bhat}]
  \label{LemmaMetricOnTensor}
  The distance $~d(m\otimes x, n\otimes y)~$ is given by the following formulas.  
\begin{align*}
d((a\otimes x),(a\otimes y) &=
	\frac{1}{2}d(x,y) \\
d((a\otimes x),(b\otimes y) &=\frac{1}{2} \min \{ d(x,L)+d(T,y)~,~d(x,R)+1+d(R,y) \}
\end{align*}

A formula similar to the last one holds for other cases of $a$, $b$ with $a \neq b$.
\end{lemma}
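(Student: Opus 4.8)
The plan is to compute the quotient pseudometric on $M\otimes X$ directly from its defining infimum, show it coincides with the stated expressions, and in passing verify non-degeneracy. Recall that the quotient pseudometric assigns to a pair of classes the infimum of $\sum_{i=1}^n d(p_i,q_i)$ over all finite chains $(p_1,q_1),\dots,(p_n,q_n)$ in $M\times X$ with $p_1$ in the first class, $q_n$ in the second, and $q_i\sim p_{i+1}$ for each $i$. Since the only non-trivial identifications are the three gluings $(b,T)\sim(a,L)$, $(a,R)\sim(c,T)$, $(c,L)\sim(b,R)$, a chain is precisely a walk through the three copies $a,b,c$ that may switch copies only at the three gluing points $P_{ab}=a\otimes L=b\otimes T$, $P_{ac}=a\otimes R=c\otimes T$ and $P_{bc}=c\otimes L=b\otimes R$. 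I will picture these copies and gluing points as a triangular ``gluing graph'' and argue that the cost of a chain decomposes as a sum of within-copy costs, each bounded below, via the triangle inequality in $(X,d)$, by $\tfrac12$ times the $X$-distance between the points at which the chain enters and leaves that copy.

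For the upper bounds I would simply exhibit the two canonical chains. For $d(a\otimes x,a\otimes y)$ the one-step chain $(a,x),(a,y)$ gives $\tfrac12 d(x,y)$. For $d(a\otimes x,b\otimes y)$ the direct chain through $P_{ab}$ costs $\tfrac12\big(d(x,L)+d(T,y)\big)$, while the chain routed through copy $c$ via $P_{ac}$ and $P_{bc}$ costs $\tfrac12\big(d(x,R)+d(T,L)+d(R,y)\big)=\tfrac12\big(d(x,R)+1+d(R,y)\big)$, using $d(T,L)=1$ between distinguished points; the smaller of the two yields the claimed upper bound.

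The substance of the proof is the matching lower bound, and this is the step I expect to be the main obstacle. Here I must show that no chain---in particular none that re-enters a copy, backtracks, or loops around the triangle---can beat these two values. The argument I would give is a reduction: by the triangle inequality within each copy, consecutive within-copy segments sharing an endpoint concatenate without increasing total cost, so every chain reduces to a walk in the gluing graph whose cost is at least $\tfrac12$ times the sum of the $X$-distances traversed in the visited copies. For the same-copy case, one-boundedness is decisive: any excursion that leaves copy $a$ and returns through the two distinct gluing points must traverse a full edge between distinguished points of another copy, contributing at least $1\geq\tfrac12 d(x,y)$, so the direct segment is optimal. For the mixed case one observes that the triangular gluing graph admits exactly two simple routes from $a$ to $b$---the direct edge and the detour through $c$---and any non-simple walk contains one of these as a sub-walk together with additional non-negative loops; comparing the induced within-copy costs gives exactly $\tfrac12\min\{d(x,L)+d(T,y),\,d(x,R)+1+d(R,y)\}$ as the infimum.

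Finally I would record that these formulas show the quotient pseudometric is a genuine metric. In the same-copy case $\tfrac12 d(x,y)>0$ whenever $x\neq y$, since within one copy two elements are identified only if equal (the classes $\{(a,L),(b,T)\}$, $\{(a,R),(c,T)\}$, $\{(b,R),(c,L)\}$ never pair two points of the same copy). In the mixed case the detour term is always at least $1$, while the direct term vanishes only when $x=L$ and $y=T$, which is precisely the situation $a\otimes x=P_{ab}=b\otimes y$. The remaining pairs of distinct labels are handled by the same chain analysis after relabelling, establishing the analogue of the second formula in all cases $a\neq b$.
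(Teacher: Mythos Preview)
The paper does not supply its own proof of this lemma; it is quoted as Lemma~7 of \cite{Bhat} and used thereafter without argument. There is therefore nothing in the present paper to compare your proposal against.

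That said, your approach is the standard direct computation of a quotient pseudometric and is sound. Exhibiting the one-step chain and the two canonical two- and three-copy chains gives the stated upper bounds immediately. For the lower bound your reduction is correct in outline: collapsing trivial transitions $q_i=p_{i+1}$ via the triangle inequality leaves a chain whose successive steps are genuine gluings, hence a walk on the triangle of copies $\{a,b,c\}$ in which each visit to a copy contributes at least $\tfrac12$ times the $X$-distance between its entry and exit points. For the same-copy case, an excursion that leaves and returns through the \emph{same} gluing point is dominated by the direct segment via the triangle inequality in $X$ (you should state this case explicitly; your sentence about ``two distinct gluing points'' only covers the other possibility), while an excursion through the two distinct gluing points picks up at least $\tfrac12+\tfrac12=1\ge\tfrac12 d(x,y)$ from the intermediate copies, using one-boundedness. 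For the mixed case, every walk from copy $a$ to copy $b$ contains one of the two simple routes as a sub-walk, and the surplus consists of closed loops with non-negative cost; this gives exactly the minimum of the two displayed terms. Your final paragraph on non-degeneracy is correct and worth including, since the paper remarks just before the lemma that the quotient is ``indeed a metric'' without justifying it.
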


Given a tri-pointed metric space $(X, d)$, we use the same letter $d$ to identify the metrics defined above on $M\times X$ and $M\otimes X$, as it is understood from the context to which $d$ we refer. As $d$ on $X$ is one bounded, by definition, the  metrics on $M\times X$ and hence on $M\otimes X$ are also one bounded. Moreover, it can be easily verified that the distance between the distinguished elements of $M\otimes X$ is one. Hence $(M\otimes X,d)$  is an object of the category of tri-pointed metric spaces.\\

To define $F$ on the categories $\mc$ , $\ml$ and $\ms$, one needs to show that its action on functions preserves the property of being continuous, Lipschitz or short.

\begin{lemma}
\label{presFunctions}
Let $X$ and $Y$ be two tri-pointed metric spaces. If $f: X \rightarrow Y$  has any of the following properties then so does $F(f)=M\otimes f$.
\begin{enumerate}[i).]
\item \label{cnt} Continuous
\item \label{lip} Lipschitz
\item \label{shrt} Short map
\item \label{isoemb} Isometric embedding
\end{enumerate}
\end{lemma}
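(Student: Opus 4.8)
The plan is to reduce all four claims to a single comparison: for arbitrary $m\otimes x,\,n\otimes y\in M\otimes X$, compare $d\bigl((M\otimes f)(m\otimes x),(M\otimes f)(n\otimes y)\bigr)=d(m\otimes f(x),\,n\otimes f(y))$ with $d(m\otimes x,\,n\otimes y)$, reading off both distances from the explicit formulas of Lemma \ref{LemmaMetricOnTensor}. The essential structural input is that $f$ preserves the distinguished points, so $f(T_X)=T_Y$, $f(L_X)=L_Y$ and $f(R_X)=R_Y$; this is exactly what lets the formula evaluated on $Y$ be matched term by term against the one on $X$.

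First I would split according to the two cases of the Lemma. When the labels agree ($m=n$) the two distances are literally $\tfrac12 d(f(x),f(y))$ and $\tfrac12 d(x,y)$, so each of continuity, the Lipschitz bound (with the \emph{same} constant), shortness, and the isometric equality transfers verbatim from $f$ to $M\otimes f$ on this piece.

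The content is in the distinct-label case, say $a\otimes x$ versus $b\otimes y$ (the remaining pairs being symmetric). Here the target distance is $\tfrac12\min\{\,d(f(x),L_Y)+d(T_Y,f(y)),\ d(f(x),R_Y)+1+d(R_Y,f(y))\,\}$, and after rewriting $L_Y=f(L_X)$ and so on I would bound each of the two summands inside the $\min$ against its source counterpart. For a short map each summand does not increase, and since $\min$ and $+$ are monotone in each argument the whole expression does not increase, giving $d(a\otimes f(x),b\otimes f(y))\le d(a\otimes x,b\otimes y)$. The isometric case is the identical computation with equalities throughout, yielding equality; since a distance-preserving map is automatically injective, $M\otimes f$ is then again an isometric embedding. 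For the Lipschitz case I would first normalise the constant to $K\ge 1$ (a $K$-Lipschitz map is $K'$-Lipschitz for any $K'\ge K$); writing $O_1,O_2$ for the two source summands, the target summands are then bounded by $KO_1$ and $KO_2$ respectively, using $1\le K\cdot 1$ for the inhomogeneous $+1$, and $\min\{KO_1,KO_2\}=K\min\{O_1,O_2\}$ delivers the $K$-Lipschitz bound. This $+1$ term is the one genuine subtlety, and it affects only the Lipschitz statement.

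Continuity is the part I expect to be the real obstacle, since it is not a uniform estimate and the distinct-label distance cannot be bounded above by the source distance. I would argue sequentially instead. Given $q_k\to p=m\otimes x$ in $M\otimes X$, partition the $q_k$ by their finitely many labels; it suffices to check that each label-subsequence maps to a sequence converging to $(M\otimes f)(p)$. On a same-label subsequence, $d(m\otimes y_k,m\otimes x)=\tfrac12 d(y_k,x)\to 0$ forces $y_k\to x$, hence $f(y_k)\to f(x)$ by continuity of $f$. On a foreign-label subsequence, say $b\otimes y_k\to a\otimes x$, the source formula forces $\tfrac12\min\{\dots\}\to 0$; since the second summand is $\ge 1$ the first must vanish in the limit, which (both terms nonnegative, $x$ fixed) compels $x=L_X$ and $y_k\to T_X$, i.e. $p$ is the glued corner $a\otimes L=b\otimes T$. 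Continuity of $f$ at $T_X$ then gives $f(y_k)\to f(T_X)=T_Y$, so $b\otimes f(y_k)\to b\otimes T_Y=a\otimes L_Y=(M\otimes f)(p)$. Thus the only route by which a foreign-label sequence can approach $p$ is through a glued corner, and continuity of $f$ at the corresponding distinguished point closes the argument.
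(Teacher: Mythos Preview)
Your proof is correct and, for the Lipschitz, short, and isometric-embedding parts, runs essentially the paper's computation through the explicit $\min$-formula of Lemma~\ref{LemmaMetricOnTensor} (the paper handles the inhomogeneous $+1$ by rewriting it as $d(T,L)$ and bounding by $k\,d(T,L)$; your normalisation $K\ge 1$ is in fact automatic for the same reason, since a tri-pointed morphism preserves two points at distance~$1$). The one genuine difference is in the continuity clause: the paper argues by $\epsilon$--$\delta$, choosing $\delta_0=\min\{\delta/2,\,1/4\}$ so that any foreign-label point within $\delta_0$ of $a\otimes x$ must satisfy $d(x,L)<\delta$ and $d(y,T)<\delta$, and then invokes continuity of $f$. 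Your sequential argument via label-subsequences reaches the same structural conclusion---a foreign-label sequence can approach $a\otimes x$ only when $x$ is the relevant glued corner---and is arguably cleaner for the pointwise statement, since it sidesteps the bookkeeping of selecting a single $\delta$ that works simultaneously at $x$ and at the distinguished points $T,L,R$.
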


\begin{proof}
(\ref{cnt}). 
Let $\epsilon>0$ be arbitrary. Since $f$ is continuous, $\exists\,\delta >0$ such that $d(f(x),f(y))<\epsilon $,  whenever $d(x,y  )<\delta $.  Choose  $\delta_{0}=\min \{   
\frac {\delta}{2},
\frac {1}{4} \}$. Suppose $d_{M\otimes X}(a\otimes x, b\otimes y)<\delta_{0}$. Then we have $d_{X}(x,L_{X})< \delta$ and $d_{X}(y,T_{X})<\delta $. By the continuity of $f$ we have $d_{Y}(f(x),f(L_{X}))<\epsilon$ and $d_{Y}(f(y),f(T_{X}))<\epsilon$. Thus, $d_{M\otimes Y}(F(f)(a\otimes x), F(f) (b\otimes y) )<\epsilon$, which is the required condition for $F(f)$ to be continuous. This shows that $M\otimes f$ is continuous when $f$ is continuous.\\
(\ref{lip}).	Let $~f~$ be a Lipschitz continuous function with the Lipschitz constant $~k~$. Note that 
\begin{align*}
\text{min} & \left\{ d\left(f(x),L\right)+d\left(T,f(y)\right)~,~d\left(f(x),R)+1+d(R,f(y)\right) \right\} \\ 
   & =\ \min \left\{ d(f(x),f(L))+d(f(T),f(y))~,~d(f(x),f(R))+1+d(f(R),f(y)) \right\}\\
   & \leq  \min \left\{kd(x,L)+kd(T,y)~,~kd(x,R)+kd(T,L)+kd(R,y) \right\}\\
   & =  k \, \min \left\{d(x,L)+d(T,y)~,~d(x,R)+d(T,L)+d(R,y)\right\}
\end{align*}

\noindent Therefore, 
\[
d\left(F(f)(a\otimes x),F(f)(b\otimes y)\right) = d\left(a\otimes f(x), b\otimes f(y)\right) \leq k d\left(a\otimes x,b\otimes y\right) 
\]
Thus, $M\otimes f$ is Lipschitz, whenever $f$ is Lipschitz.\\ \hspace{2pt}\\
(\ref{shrt}) is proved in Proposition 8 of \cite{Bhat} and the proof of (\ref{isoemb}) is similar to that of (\ref{lip}).
\end{proof}

It now follows from Lemma \ref{presFunctions} that $F$ is a well defined endofunctor on $\mc$ and that it restricts to an endofunctor on the categories $\ms$  and  $\ml$. We use the same letter $F$ to refer to all these functors and specify the domain when there is ambiguity.\\     

An example that is central to our discussion is the presentation of $\s$ as a final coalgebra for $F$. We recall this example from \cite{Bhat}.

\begin{example}[Sierpinski Gasket] \label{Sierpinski} Recall that $\s$ is the unique nonempty compact subset of $\mathbb{R}^2$ such that $\s=\bigcup_{i\in\{a,b,c\}} \sigma_i\left(\s \right)$, where $\sigma_i$, for $i\in\{a,b,c\}=M$, are as in Section \ref{introduction}. We consider $\s$ as an object of $\set$, by designating  $(1/2, \sqrt{3}/2)$, $(0,0)$ and $(1,0)$ (three fixed points of $\sigma_a$, $\sigma_b$ and $\sigma_c$ respectively) as the three distinguished elements $T$, $L$ and $R$ respectively. Define the map $\tau : M\otimes \s \rightarrow \s$ by $\tau\left(m\otimes x \right)=\sigma_m\left(x \right)$. It follows that $\tau$ is a well defined bijection preserving the distinguished elements.  Then $\s$ carries a coalgebra structure for $F$ on $\set$, given by $\sigma = \tau^{-1} : \s \rightarrow M\otimes \s$.

Moreover, it follows that $\sigma$ is a short map and in particular Lipschitz continuous. Hence $\sigma : \s \rightarrow M\otimes \s$ is also a coalgebra for $F=\mtimes$ on $\mc$ as well as on $\ml$  and  $\ms$.
\end{example}

Given a tri-pointed set $X$ (or a metric space), we will denote $F\left(F\left(X\right)\right) = M\otimes \left(M\otimes X \right)$ by $M^2\otimes X$. Similarly, given a morphism $f$ in $\set$, $M^2\otimes f$ will stand for $F\left(F\left( f\right)\right)=M\otimes\left(M\otimes f\right)$. Like wise, for any integer $n\in \nn$,  the $n$-fold composition of $F$ will be denoted by $M^n\otimes -$. We set $M^0\otimes X =X$ and $M^0f=f$. A typical element of $M^n\otimes X$ will be denoted by $m_1\otimes \cdots \otimes m_n \otimes x$.\\

The following observation easily follows from Lemma \ref{LemmaMetricOnTensor}.

\begin{lemma}[Lemma 15, \cite{Bhat}]
\label{totalybound}
Let $X$ be a tri-pointed metric space (i.e., an object of $\mc$), $n\in \nn$ and $m_1, \cdots m_n \in M$. Let $\mathbf{m}=m_1\otimes m_2 \otimes \cdots \times m_n$

For all $x_1, x_2 \in X$
\[
d\left(\mathbf{m} \otimes x_1 ~,~  \mathbf{m} \otimes x_2 \right) \leq 2^{-n} 
\]
In particular, for all $x_1$, $x_2$, $x_1'$, $x_2'\in X$
\[
\left| d\left( \mathbf{m} \otimes x_1~,~ \mathbf{m} \otimes x_2 \right) - d\left( \mathbf{m} \otimes x_1'~,~ \mathbf{m} \otimes x_2' \right) \right| \leq 2^{1-n}
\]
\end{lemma}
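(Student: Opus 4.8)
The plan is to prove the first inequality by induction on $n$, using only the equal-label case of Lemma \ref{LemmaMetricOnTensor}, and then to derive the second inequality from the first via the triangle inequality.

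For the first bound, I would argue by induction on the length $n$ of the string $\mathbf{m}$. When $n=0$ the elements $\mathbf{m}\otimes x_1$ and $\mathbf{m}\otimes x_2$ are just $x_1$ and $x_2$, so the bound $d(x_1,x_2)\le 1 = 2^{0}$ is exactly the one-boundedness of the metric on $X$. For the inductive step, write $\mathbf{m}=m_1\otimes\mathbf{m}'$ with $\mathbf{m}'=m_2\otimes\cdots\otimes m_n$. The crucial observation is that both $\mathbf{m}\otimes x_1$ and $\mathbf{m}\otimes x_2$ carry the \emph{same} leading label $m_1$, so the first formula of Lemma \ref{LemmaMetricOnTensor} (the case of equal labels) applies and yields
\[
d\left(\mathbf{m}\otimes x_1,\ \mathbf{m}\otimes x_2\right)=\tfrac{1}{2}\,d\left(\mathbf{m}'\otimes x_1,\ \mathbf{m}'\otimes x_2\right).
\]
By the induction hypothesis the right-hand factor is at most $2^{-(n-1)}$, so the left-hand side is at most $\tfrac12\cdot 2^{-(n-1)}=2^{-n}$, which completes the induction.

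For the second inequality, I would apply the reverse (quadrilateral) triangle inequality $\lvert d(u,v)-d(u',v')\rvert\le d(u,u')+d(v,v')$ with $u=\mathbf{m}\otimes x_1$, $v=\mathbf{m}\otimes x_2$, $u'=\mathbf{m}\otimes x_1'$ and $v'=\mathbf{m}\otimes x_2'$. The two terms on the right are $d(\mathbf{m}\otimes x_1,\mathbf{m}\otimes x_1')$ and $d(\mathbf{m}\otimes x_2,\mathbf{m}\otimes x_2')$, each bounded by $2^{-n}$ by the first part already established. Summing gives the desired bound $2^{-n}+2^{-n}=2^{1-n}$.

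I do not expect any genuine obstacle here; the argument is essentially bookkeeping. The one point to keep straight is that at every stage of the induction the two elements being compared share the leading label, which is precisely what licenses the repeated use of the contracting equal-label formula and produces the factor $2^{-n}$; the cross-label formula of Lemma \ref{LemmaMetricOnTensor} is never needed.
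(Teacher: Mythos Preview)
Your argument is correct and is exactly the approach the paper has in mind: it does not spell out a proof but simply remarks that the lemma ``easily follows from Lemma~\ref{LemmaMetricOnTensor}'', and your induction using the equal-label case together with the reverse triangle inequality is precisely how one unpacks that remark.
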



\section{Initial algebra of $F$ on $~\mathbf{Set_{3}}~$ and its completion }
\label{secOnSET}

The set $I=\{T, L, R\}$, with distinguished points as suggested by the notation, is the initial object of the category $\set$. We will first consider the initial algebra of $F$ and then describe a canonical metric on it and its Cauchy completion.

\subsection{Initial algebra $G$ of $F$ on $\set$}

One can easily determine the initial algebra for $F$ on $\set$ by considering the initial chain (see \cite{Bar} and \cite{Adamek}). The initial chain, determined by the initial object is
\begin{equation}
\label{initialChain}
\begin{tikzcd}
I \arrow{r}{!} & M\otimes I \arrow{r}{M\otimes !} & M^{2}\otimes I \arrow{r}{M^{2}\otimes !}  & \cdots \rightarrow M^{j-1}\otimes I \arrow{r}{M^{j-1}\otimes ! } &  M^{j}\otimes I \cdots 
\end{tikzcd}     
\end{equation}

\noindent  where $!$ is the unique morphism from $I$ to $M\otimes I$ in $\set$. \\

The colimit of the sequence can be determined as follows. Take $\,C=\bigcup M^{n}\otimes I$. Define a relation on $C$ as follows. Let $x,y \in C$. Then$~x\in M^{r}\otimes I $ and $y\in M^{s}\otimes I ~$  for some $~r,s$. Without lost of generality take $s>r$. The relation $\approx$ is defined by $x \approx y~$ iff $f(x)=y$; where $f=M^{s-1}\otimes !\circ \cdots  \circ M^{r}\otimes !$. Let $\sim$ be the equivalence closure of $\approx$. Then $G=C / \sim$, the quotient of $C$ by the equivalence relation $\sim$ is the colimit of the initial chain and also the carrier set of the initial algebra. For $x\in C$, denote by $[x]$ the equivalence class of $x$. $G$ is lifted to a tri-pointed set by declaring the equivalence classes $[T]$, $[L]$ and $[R]$ as the distinguished points $T_G$, $L_G$ and $R_G$ respectively. Morphisms to the colimit $C_{n}:M^{n}\otimes I \longrightarrow G$ are given by $C_{n}(x)=[x]$. The following lemma follows from Ad\a'amek's Theorem (\cite{Adamek}).
\begin{lemma}
\label{InitialAlgOnSet}
The initial algebra of $F$ on $\set$ is $(G,g:M\otimes G\rightarrow G)$, where $g:M\otimes G\rightarrow G$ is given by $g(m\otimes [x])=[m\otimes x]$.
\end{lemma}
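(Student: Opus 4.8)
The plan is to verify that $G$ satisfies the hypotheses of Ad\'amek's fixed-point theorem (\cite{Bar}, \cite{Adamek}), which produces the initial algebra as the colimit of the initial chain \eqref{initialChain}, and then to check that the algebra structure the theorem supplies is exactly the map $g$ of the statement. Ad\'amek's theorem requires three ingredients: that $\set$ possess an initial object, that it admit colimits of $\omega$-chains, and that $F=\mtimes$ preserve such colimits. The first is already in hand, since $I=\{T,L,R\}$ is initial in $\set$. For the second, one notes that $\set$ is the category of sets carrying a fixed three-element set of distinguished points and point-preserving maps, so the forgetful functor to $\mathbf{Set}$ creates sequential (indeed all filtered) colimits; concretely the colimit is computed on underlying sets exactly as the quotient $C/\!\sim$ described above, with the distinguished points carried along the chain. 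This is the content of the explicit construction of $G$ preceding the statement.

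The crux is the third ingredient: that $F$ is $\omega$-cocontinuous. Because the forgetful functor $\set\to\mathbf{Set}$ creates sequential colimits, it suffices to establish the interchange on underlying sets, and I would argue this in two stages. First, since $M=\{a,b,c\}$ is finite, the set-level product $M\times -$ is a finite product and so commutes with filtered colimits in $\mathbf{Set}$; hence $M\times X_\infty\cong\operatorname{colim}_n(M\times X_n)$ for a chain $(X_n)$ with colimit $X_\infty$. Second, passing from $M\times X$ to $M\otimes X=(M\times X)/\!\sim$ is the formation of a quotient (a coequalizer), and the relation $\sim$ is generated by $(b,T)\sim(a,L)$, $(a,R)\sim(c,T)$, $(c,L)\sim(b,R)$, which involve only the distinguished points. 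As the distinguished points are stable along any chain in $\set$, this generating relation is uniform across the chain, so forming the quotient commutes with the sequential colimit. Composing the two stages yields $F(X_\infty)\cong\operatorname{colim}_nF(X_n)$. The main obstacle is precisely to make this interchange rigorous; the cleanest justification is that filtered colimits in $\mathbf{Set}$ commute with finite limits and with coequalizers, hence with the entire construction of $\mtimes$.

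With the three hypotheses verified, Ad\'amek's theorem asserts that the colimit cocone $C_n:M^n\otimes I\to G$ exhibits $G$ as the carrier of the initial algebra, the structure map being the canonical isomorphism $FG\xrightarrow{\;\cong\;}G$ obtained from cocontinuity. Unwinding this isomorphism is the last step: cocontinuity identifies $FG=M\otimes G$ with $\operatorname{colim}_n M^{n+1}\otimes I$, the colimit of the chain \eqref{initialChain} shifted by one, whose colimit cocone is $C_{n+1}$; since this shifted chain is cofinal in the original, the comparison map back to $G$ sends the class of $m\otimes x$, for $x\in M^n\otimes I$, to $C_{n+1}(m\otimes x)=[m\otimes x]$. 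Thus the induced structure map is $g(m\otimes[x])=[m\otimes x]$, matching the statement. It remains only to record that this formula is well defined: if $[x]=[x']$ with $x\in M^r\otimes I$ and $x'\in M^s\otimes I$, then applying the relevant chain map $M\otimes(M^{s-1}\otimes !\circ\cdots\circ M^r\otimes !)$ shows $m\otimes x\approx m\otimes x'$, a direct consequence of the functoriality already used to build the cocone.
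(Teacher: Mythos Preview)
Your proposal is correct and follows the same route as the paper, which simply records that the lemma follows from Ad\'amek's Theorem; you have merely expanded the verification of the hypotheses (initial object, $\omega$-chain colimits in $\set$, and $\omega$-cocontinuity of $\mtimes$) and unwound the resulting structure map to the explicit formula $g(m\otimes[x])=[m\otimes x]$. The paper gives no further argument beyond the citation, so your write-up is a strictly more detailed version of the same proof.
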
 

One can iterate an $F$-algebra $(X,e)$ to obtain the following chain.
\begin{equation}
\nonumber
\label{algebraIterated}
 \begin{tikzcd}
\cdots\arrow{r} M^{k}\otimes X \arrow{r}{M^{k-1}\otimes e} & M^{k-1}\otimes X \arrow{r} & \cdots  \arrow{r} & M\otimes X \arrow{r}{e} & X
\end{tikzcd}    
\end{equation}

\noindent For $s > r\geq 0$, we let $e_{rs}:M^{s}\otimes X \rightarrow M^{r}\otimes X$ be the composition of the maps $ M^{r}\otimes e$, $M^{r+1}\otimes e$,...,$M^{s-2}\otimes e$ and $M^{s-1}\otimes e$.
\begin{equation}
\nonumber
\label{esr}
e_{rs}=M^{r}\otimes e ~\circ~ M^{r+1}\otimes e ~\circ~ \cdots ~\circ~ M^{s-2}\otimes e ~\circ~ M^{s-1}\otimes e
\end{equation}

\noindent In particular, $e_{(r-1)r}=M^{r-1}\otimes e$. When $r=0$, $e_{0s}$ will be denoted simply by $e_s$. Thus, $e_s:M^s\otimes X\rightarrow X$ is the composition of the maps $e, M\otimes e$,...,$M^{s-1}\otimes e$.
\begin{equation}
\label{ek}
e_{s}=e\circ M\otimes e \circ \cdots  \circ M^{s-1}\otimes e
\end{equation}  

\noindent For $t>s>r\geq 0$, we have $e_{rt}=e_{rs}\circ e_{st}$.\\

The mediating morphism from the initial algebra $(G, g)$ can be computed explicitly as follows. Let $(X,e)$ be an algebra for $F$ on $\mathbf{Set_3}$, where $\,e\,$ is a function that preserves the distinguished points. For $x=[(m_{1}\otimes \cdots m_{k}\otimes d)]\in G$, where $d\in \{T,L,R\}$, let $\overline{x}=m_{1}\otimes \cdots m_{k}\otimes d_{X}$, where $d_{X}$ is the corresponding distinguished element of $X$. For instance if $d=T$, then $d_{X}=T_{X}$.

 \begin{lemma}
 \label{mediationMorAlgebra}
Let $\,(X,e)\,$ be an $F$-algebra on $\set$. Then, the mediating morphism from $G$ to $X$, $f:G \longrightarrow X$,  is given by  
$$f\left(\left[\left(m_{1}\otimes \cdots m_{k}\otimes d\right)\right]\right)=e_k\left( m_{1}\otimes \cdots m_{k}\otimes d_X\right).$$
\end{lemma}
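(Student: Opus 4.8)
The plan is to recognize the stated formula as the map induced on the colimit $G$ by a canonical cocone over the initial chain, and then to check directly that this induced map is an $F$-algebra homomorphism, so that it is the mediating morphism by initiality of $(G,g)$.

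First I would rewrite $\overline{x}$ functorially. Writing $!_X : I \to X$ for the unique $\set$-morphism out of the initial object (which sends $T,L,R$ to $T_X,L_X,R_X$), for $x = m_1\otimes\cdots\otimes m_k\otimes d \in M^k\otimes I$ one has $\overline{x} = (M^k\otimes !_X)(x)$, so the claimed formula reads $f\circ C_k = \phi_k$, where $\phi_k := e_k\circ (M^k\otimes !_X) : M^k\otimes I \to X$ and $C_k$ is the colimit injection. Using $e_k = e\circ(M\otimes e_{k-1})$ from \eqref{ek} together with functoriality of $\mtimes$, these maps satisfy the recursion $\phi_0 = !_X$ and $\phi_n = e\circ(M\otimes \phi_{n-1})$.

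The key step is to show that $\{\phi_n\}$ is a cocone over the initial chain \eqref{initialChain}, i.e. $\phi_n\circ(M^{n-1}\otimes\,!) = \phi_{n-1}$ for all $n$, which is exactly what makes $f$ well defined on $\sim$-classes. I would argue by induction on $n$. For $n=1$ the identity $\phi_1\circ\,! = e\circ(M\otimes !_X)\circ\,! = !_X = \phi_0$ holds because both sides are $\set$-morphisms $I\to X$ and $I$ is initial; this use of initiality of $I$ is the real content of the base case. For the inductive step I would write $M^{n-1}\otimes\,! = M\otimes(M^{n-2}\otimes\,!)$ and push the composition through the functor $\mtimes$:
\[
\phi_n\circ(M^{n-1}\otimes\,!) = e\circ\bigl(M\otimes(\phi_{n-1}\circ(M^{n-2}\otimes\,!))\bigr) = e\circ(M\otimes\phi_{n-2}) = \phi_{n-1},
\]
using the induction hypothesis in the middle equality. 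Since $G$ is the colimit with injections $C_n$, the cocone $\{\phi_n\}$ induces a unique map $f:G\to X$ with $f\circ C_n = \phi_n$, which unwinds to the stated formula; as each $\phi_n$ is a composite of $\set$-morphisms it preserves distinguished points, so $f$ is a morphism in $\set$.

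It remains to verify that $f$ is an $F$-algebra homomorphism, whence it is the mediating morphism. For this I would evaluate both sides of $f\circ g = e\circ(M\otimes f)$ on a generator $m\otimes[x]$ with $x\in M^n\otimes I$: the left side is $f([m\otimes x]) = \phi_{n+1}(m\otimes x) = e(m\otimes\phi_n(x))$, while the right side is $e(m\otimes f([x])) = e(m\otimes\phi_n(x))$, so they agree. Finally, since $(G,g)$ is the initial algebra (Lemma \ref{InitialAlgOnSet}), any $F$-algebra homomorphism $G\to X$ is unique, so $f$ is the mediating morphism. I expect the only delicate point to be the bookkeeping in the cocone identity --- keeping the functor powers $M^{n}\otimes-$ straight and distinguishing the two maps $!:I\to M\otimes I$ and $!_X:I\to X$ --- rather than any conceptual difficulty; everything else is a direct unwinding of the colimit description of $G$.
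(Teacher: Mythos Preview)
Your proof is correct and takes essentially the same approach as the paper: both use initiality of $I$ to establish the base commutativity (your $\phi_1\circ\,!=\phi_0$ is exactly the paper's first square), extend to all $n$ by applying the functor $M\otimes-$ (your induction corresponds to the paper's ``stacking'' of squares), and then verify the algebra homomorphism condition directly via $e\circ(M\otimes e_k)=e_{k+1}$. Your cocone/colimit packaging of the well-definedness argument is a bit cleaner than the paper's element-level check, but the content is the same.
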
 

\begin{proof}
Let us first prove that $~f~$ is well defined. Let $[(p_{1}\otimes\cdots  p_{r}\otimes d_{r})]=[(q_{1}\otimes\cdots  q_{s}\otimes d_{s})]$. Thus we have $p_{1}\otimes\cdots  p_{r}\otimes d_{r}\sim q_{1}\otimes\cdots  q_{s}\otimes d_{s}$. As the case $r=s$ is trivial, without loss of generality we may assume $s>r$. Now, consider the map $!_{sr}:M^r\otimes I \rightarrow M^s\otimes I$, given by composition  $$!_{sr}= M^{s-1}\otimes !\circ M^{s-2}\otimes !\cdots M^{r+1}\otimes !\circ M^{r}\otimes !$$  of the maps in the initial chain (\ref{initialChain}). From the definition of $\sim$, it follows that $!_{sr} \left( p_{1}\otimes\cdots  p_{r}\otimes d_{r} \right)=  q_{1}\cdots  \otimes d_{s}$.\\

\noindent Since $I$ is the initial object the unique map $eta:I\rightarrow X$ is such that the following square commutes.
\begin{equation*}
\label{square1}
\begin{tikzcd}
M\otimes X  \arrow{r}{e} &  X\\
M\otimes I \arrow{u}{M\otimes  \eta}  & \arrow{l}{!} I \arrow{u}{\eta}
\end{tikzcd}     
\end{equation*}

By applying $F=M\otimes -$ to the above square repeatedly and stacking the resulting squares, we obtain the following commuting square. 
\begin{equation*}
\label{square2}
\begin{tikzcd}[column sep=huge]
M^{s}\otimes X \arrow{rrrr}{e_{rs}=M^{r}\otimes e ~\circ~ M^{r+1}\otimes e ~\circ~ \cdots ~\circ~ M^{s-2}\otimes e ~\circ~ M^{s-1}\otimes e~} & & & & M^{r}\otimes X\\
M^{s}\otimes I \arrow{u}{M^{s}\otimes  \eta}   & & & &   \arrow{llll}{!_{sr}=M^{s-1}\otimes !~\circ~ M^{s-2}\otimes !~\circ~\cdots ~\circ~ M^{r+1}\otimes !~\circ~ M^{r}\otimes ! } \arrow{u}{M^{r}\otimes  \eta} M^{r}\otimes I
\end{tikzcd}     
\end{equation*}

\noindent It now follows from the commutativity of the above diagram that $$\left(e_{rs} \right)\left(q_{1}\otimes\cdots  \otimes q_{s}\otimes d_{s_{X}}\right)=p_{1}\otimes\cdots \otimes  p_{r}\otimes d_{r_{X}}.$$ Because $e_s = e_r \circ e_{rs}$, we have; 
\begin{align*}
    f\left( [(q_{1}\otimes\cdots  q_{s}\otimes d_{s})] \right) &=  e_s\left( q_{1}\otimes\cdots  q_{s}\otimes d_{s_X} \right)\\
    &=\left( e_r\circ e_{rs} \right) \left( [(q_{1}\otimes\cdots  q_{s}\otimes d_{{s}_X})] \right)\\
    &=e_r\left( p_{1}\otimes\cdots  p_{r}\otimes d_{r_{X}} \right)\\
    &=f\left( [(p_{1}\otimes\cdots  p_{r}\otimes d_r )]\right).
\end{align*}   

\noindent Therefore, $f$ is well defined.\\

We are left to show that the following  diagram commutes. 
\begin{equation*}
\label{square3}
\begin{tikzcd}[column sep=huge]
M\otimes G \arrow{r}{g} \arrow{d}{M\otimes f}&  G \arrow{d}{f}\\
M\otimes X \arrow{r}{e} &  X
\end{tikzcd}     
\end{equation*}

\noindent This amounts to verifying the identity
\[
e\left(  m_{0}\otimes e_k \left( m_{1}\otimes m_{2}\cdots\otimes m_{k}\otimes d_{k_X} \right) \right) = e_{k+1}\left( m_{0}\otimes m_{1}\cdots  \otimes m_{k}\otimes d_{k_{X}} \right).
\]
\noindent i.e., we have to show that $e \circ \left( M\otimes e_k \right) = e_{k+1}$. Note that the equality follows by substituting the definition of $e_k$ to the left hand side and using the functoriality of $M\otimes -$.
\end{proof}

\subsection{Canonical metric on $G$}

The discrete metric is the canonical metric on  the initial object $I$ of $\set$ by which it is also an object of the category of tri-pointed metric spaces ($\mc$). For every $k\in \mathbb{N}$,  $M^k\otimes I$ also carries a canonical metric; namely, the quotient metric guaranteed by Lemma \ref{LemmaMetricOnTensor}. Moreover, $! : I \rightarrow M\otimes I$ is an isometric embedding as it maps the elements of $I$ to the corresponding distinguished elements. It then follows from  (\ref{isoemb}) of Lemma \ref{presFunctions}, that $M^k \otimes !$, for every $k\in \mathbb{N}$, is also an isometric embedding. Thus,the initial chain (\ref{initialChain}) is a diagram in $\mc$, where all the maps are isometric embeddings. In this case $G$, the colimit of the initial chain (Equation \ref{initialChain}) in $\set$, carries a canonical metric for which it is also the colimit of the initial chain in $\mc$.\\

Loosely speaking, $G$ can be understood as the union of $M^{k}\otimes I$ where $M^{k}\otimes I$ is realized as a subset of $M^{k+1}\otimes I $ via the isometric embedding $M^{k}\otimes !$. Formally, it is the quotient metric on $\,G= \left( \bigcup M^{n}\otimes I  \right) / \sim$. We will denote this metric by $d_G$ or simply by $d$ when it is understood from the context.\\

In particular, $C_{n}:M^{n}\otimes I \longrightarrow G$ is an isometric embedding, for every $n$. Given, $\chi_p = m_{1}\otimes \cdots m_{p} \otimes d_{p}\in M^p\otimes I$ and $\chi_q=m_{1}\otimes  \cdots \otimes m_{q} \otimes d_{q}\in M^q \otimes I$, the distance between $[\chi_p]$ and $[\chi_q]$ in $G$ can be computed as follows. Without loss of generality assume $p\geq q$. Then 
\begin{equation}
\label{metricG}
    d_G\left( [\chi_p] ,[\chi_q] \right) = d_q\left( !_{pq}\left(\chi_q\right) , \chi_p \right),
\end{equation} 

\noindent where $d_q$ is the metric on $M^q \otimes I$.\\

With this canonical metric on $G$, $M\otimes G$ is also equipped with the canonical metric induced by $G$, as guaranteed by Lemma \ref{LemmaMetricOnTensor}. From now on, when we refer to the metric structure of $G$, we shall always mean this metric on $G$ and $M\otimes G$. Moreover, it is clear from the definition, which also follows from Lambek's Lemma (\cite{Lam}), that $g$ is onto. The following lemma now  follows from the above observations and has already been observed in Proposition 10 of \cite{Bhat}.

\begin{lemma}[Proposition 10, \cite{Bhat} ]
\label{GIsometry}
The structure map $g:M\otimes G \rightarrow G$ of the initial algebra of $F$ on $\set$ is an isometric isomorphism in $\mc$.

In particular $(G, g)$ is an algebra for $F$ on $\mc$.
\end{lemma}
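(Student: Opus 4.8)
The plan is to reduce everything to the single statement that $g$ preserves distances. Indeed, $g$ is already known to be a bijection of $\set$ that fixes the distinguished points: by Lemma \ref{InitialAlgOnSet} the pair $(G,g)$ is the initial $F$-algebra on $\set$, so $g$ is an isomorphism in $\set$ by Lambek's Lemma, and surjectivity is in any case transparent from the definition (every $[m\otimes y]$ equals $g(m\otimes[y])$, while $[T]=[a\otimes T]=g(a\otimes[T])$ because $!(T)=a\otimes T$ in the initial chain, and similarly for $L,R$). Once $g$ is shown to be an isometry it is automatically injective, hence a bijective isometry; and the inverse of a bijective isometry is again an isometry, so ``isometric isomorphism in $\mc$'' comes for free. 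Thus the whole task is the isometry claim.

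The key device I would use is a commuting triangle relating $g$ to the colimit embeddings $C_n$. For each $k\in\nn$, viewing $M^{k+1}\otimes I$ as $M\otimes(M^k\otimes I)$, I claim
$$ g\circ(M\otimes C_k)=C_{k+1}, $$
which is immediate on elements, since $(M\otimes C_k)(m\otimes\chi)=m\otimes[\chi]$ and then $g(m\otimes[\chi])=[m\otimes\chi]=C_{k+1}(m\otimes\chi)$. The point of this identity is that the two maps appearing in it are isometric embeddings: $C_{k+1}$ is one by construction of $d_G$, and $M\otimes C_k$ is one by (\ref{isoemb}) of Lemma \ref{presFunctions} applied to the isometric embedding $C_k$, where the domain $M\otimes(M^k\otimes I)$ carries precisely the metric $d_{k+1}$ of $M^{k+1}\otimes I$. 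Consequently, for all $u,v\in M^{k+1}\otimes I$,
\begin{align*}
d_{M\otimes G}\bigl((M\otimes C_k)(u),(M\otimes C_k)(v)\bigr) &= d_{k+1}(u,v)=d_G\bigl(C_{k+1}(u),C_{k+1}(v)\bigr)\\
&= d_G\bigl(g\,(M\otimes C_k)(u),\,g\,(M\otimes C_k)(v)\bigr).
\end{align*}

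To finish, I would observe that any two elements of $M\otimes G$ lie in the image of $M\otimes C_k$ for a single large $k$. Given $m\otimes[\chi]$ and $n\otimes[\psi]$, I push the representatives of $[\chi]$ and $[\psi]$ up to a common level $k$ along the chain maps $M^{j}\otimes\,!$ — legitimate precisely because these are isometric embeddings and so do not change the class — obtaining $\chi,\psi\in M^k\otimes I$ with $m\otimes[\chi]=(M\otimes C_k)(m\otimes\chi)$ and $n\otimes[\psi]=(M\otimes C_k)(n\otimes\psi)$. The displayed identity with $u=m\otimes\chi$, $v=n\otimes\psi$ then yields $d_{M\otimes G}(m\otimes[\chi],n\otimes[\psi])=d_G(g(m\otimes[\chi]),g(n\otimes[\psi]))$, establishing that $g$ is an isometry.

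The only genuine subtlety — and what the triangle is designed to finesse — is that $d_{M\otimes G}$ is defined globally from $d_G$ through the piecewise formula of Lemma \ref{LemmaMetricOnTensor}, whereas $g$ acts level by level; I expect the main care to go into matching these two descriptions without re-deriving the min-formula by hand, which is exactly what $g\circ(M\otimes C_k)=C_{k+1}$ together with ``tensoring preserves isometric embeddings'' accomplishes. With $g$ a distinguished-point-preserving bijective isometry, it is in particular a short, hence continuous, morphism of $\mc$, so $(G,g)$ is an $F$-algebra on $\mc$, completing the argument.
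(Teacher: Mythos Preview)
Your argument is correct and is precisely the explicit version of what the paper leaves implicit: the paper does not give a self-contained proof here but simply says the lemma ``follows from the above observations'' (that each $C_n$ is an isometric embedding, that $M\otimes-$ preserves isometric embeddings, and that $g$ is bijective by Lambek) and cites \cite{Bhat}. Your triangle $g\circ(M\otimes C_k)=C_{k+1}$ together with the common-level argument is exactly how one packages those observations into a proof, so there is no substantive difference in approach.
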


\begin{example}
\label{ExampleInitialAlgeMSCtri}
One may wonder whether $(G, g)$ is also the initial algebra for $F$ on $\mc$. We give an example to show that this is not the case. However, it should be recalled here that $(G, g)$ is the initial algebra for $F$ when the morphisms are restricted to be short maps, i.e., $F$ on $\ms$ (Theorem 11 of \cite{Bhat}). \\

Consider the tri-pointed metric space $Y=\{t, l, r\}$ equipped with the discrete metric (which is the only possible metric in this case) and distinguished points as suggested by the notation. Let the function $\al :M\otimes Y \longrightarrow  Y$ be given by $\al (a\otimes t)=t\,,\,\al (c\otimes r)=r$ and $\al (a\otimes l)=\al (a\otimes r)=\al (b\otimes l)=\al (b\otimes r)=l$. Clearly $\al$ is a morphism in $\set$. Because $\al$ is a function from a finite metric space, it is Lipschitz  and hence  continuous. Thus $(Y, \al)$ is an algebra for $F$ on $\mc$ (and also on $\set$).\\

Because $(G, g)$ is the initial algebra for $F$ on $\set$, there is a unique function $f_Y:G\rightarrow Y$ such that $f_Y \circ g = \al \circ (M\otimes f_Y)$, given by Lemma \ref{mediationMorAlgebra}. Note that in $G$, $[T]=[a\otimes T]=[a\otimes a\otimes T]=\cdots=[a\otimes a\otimes \cdots \otimes a \otimes T]=\cdots$,  and $f_Y([T])=f_Y([a\otimes a\otimes \cdots \otimes a \otimes T])=t$ as the distinguished elements are preserved by $f_Y$. Let $y=[a\otimes a\otimes \cdots \otimes a \otimes L]$, where $a$ occurs $n$ times. Then, it can be easily verified that $f_Y(y)=l$.\\

Note that $d([T],y)=\frac{1}{2^{n}}$; however,  $d(f_Y(T),f_Y(y))=d(t,l)=1$. Thus $f_Y$ is not continuous. Hence $(G,g)$ is not the initial algebra for $F$ on  $\mc$. This also shows that $(G, g)$ is not the initial algebra for $F$ on $\ml$.
\end{example}

\subsection{Cauchy completion of $G$}

As it has been shown in \cite{Bhat}, Cauchy completion can be described as a functor that commutes with $M\otimes -$. Given $(X,d)\in \mc$, we will denote the Cauchy completion of $X$ by $\mathbf{C}\left(X\right)$. We recall the following.\\

$\mathbf{C}\left(X\right)$ consists of the equivalence classes of Cauchy sequences in $X$, where two sequences $(a_{n})$ and $(b_{n})$ are equivalent if $\lim_{n\rightarrow \infty}d(a_{n},b_{n})=0$. Let us denote the equivalence class of the sequence $(a_{n})$ by $\left[a_{n}\right]$. The distance in $\mathbf{C}\left(X\right)$ is given by $d\left(\left[ a_n \right] \, , \, \left[ b_n \right] \right)=\lim_{n\rightarrow \infty} d\left(a_n , b_n\right)$. $\mathbf{C}\left(X\right)$ is a one bounded complete metric space. It can be made into a tri-pointed metric space by choosing the distinguished elements to be the constant sequences given by the corresponding distinguished elements of $X$. This construction extends to a functor $\mathbf{C}$ on the category of tri-pointed metric spaces, where for a continuous function $f:X\rightarrow Y$, $\mathbf{C}\left(f\right)\left(\left[x_n\right]\right)=\left[ \left( f(x_n) \right)\right]$. Thus the Cauchy completion $\bc$ is a functor $\mathbf{C}:\mc \rightarrow \mc$, where the image is actually the complete ones. It directly follows from the construction that if $f$ is a short map, then so is $\mathbf{C}\left(f\right)$. Thus the Cauchy completion also gives rise to an endofunctor $\mathbf{C}:\ms\rightarrow\ms$.\\

The map $\nu_X:X\rightarrow \mathbf{C}\left(X\right)$ that send each $x\in X$ to the equivalence class $[x]$ of the constant sequence $(x_n=x)$ is an isometric embedding of $X$ into a dense subset of $\mathbf{C}\left(X\right)$. As $\nu_X$ preserve the distinguished points, it is a morphism in $\mc$. This enables us to identify $X$ as a dense subset of $\bc(X)$.\\

$\bc$ can be composed with $F=\mtimes$ in two different ways: $\mathbf{C}\circ F$ and $F \circ \mathbf{C}$. However, these two compositions are naturally isomorphic. Specifically, $\beta_X : M\otimes \mathbf{C}(X) \rightarrow \mathbf{C}\left(M\otimes X\right)$ given by 
\[
\beta_X \left(m\otimes\left[x_{n}\right]\right)=\left[m\otimes x_{n}\right]
\]

is a natural isomorphism between $\bc\left(M\otimes X\right)$ and $M\otimes \bc\left(X\right)$. The inverse of $\beta_X$ can be described as follows. Consider $\bc\left(M\otimes X\right)$. Because $M$ is finite, in every sequence $[m_n\otimes x_n]\in \bc\left(M\otimes X\right)$ some element of $M$ must occur infinitely many times. Let $m\in M$ be such an element and $(x_{n_{k}})$ be the subsequence of $(x_{n})$ such that $m_{n_k}=m$. Define $\gamma_{X} : \mathbf{C}\left(M\otimes X\right) \rightarrow M\otimes \left(\mathbf{C}(X)\right)$ by $~\gamma_X\left([m_n\otimes x_n]\right)=m\otimes [x_{n_{k}}] $. One can easily check that $\gamma_X$ is well defined and that it is inverse to $\beta_X$.\\

The collection of morphisms $\beta=\{\beta_X\}$ determines a natural isomorphism  from $F\circ \mathbf{C}$ to $\mathbf{C}\circ F$, whose inverse is $\gamma=\{\gamma_X\}$. The naturality of $\gamma$ and $\beta$ can be easily checked. Note that, $\gamma_X$'s and $\beta_X$'s are isometric embeddings and in particular short maps. Thus, $\mathbf{C}\circ F$ and $F\circ \mathbf{C}$ on $\ms$ are also naturally isomorphic. We record these observations in the following lemma.

\begin{lemma}[Lemma 13 of \cite{Bhat}]
$\bc\circ F$ and $F\circ \bc$ are naturally isomorphic as endofunctors on both $\mc$ and $\ms$
\end{lemma}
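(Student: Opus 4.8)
The maps realizing the claimed isomorphism have already been written down, so the task is purely verification. My plan is to check four things in order: that $\beta_X$ and $\gamma_X$ are each well defined, that they are mutually inverse, that the families $\beta=\{\beta_X\}$ and $\gamma=\{\gamma_X\}$ are natural, and finally that each $\beta_X$ is an isometry. The last point is what lets the same argument serve for $\ms$ as well as $\mc$, since an isometric bijection is in particular short. Well-definedness of $\beta_X$ is the routine part: one checks that $(m\otimes x_n)$ is Cauchy in $M\otimes X$ whenever $(x_n)$ is Cauchy in $X$ (immediate from the equal-label clause $d(m\otimes x,m\otimes y)=\tfrac12 d(x,y)$ of Lemma \ref{LemmaMetricOnTensor}), that the value is unchanged under passing to an equivalent representative, and that it respects the three gluing relations, which already hold at the level of $M\otimes X$.

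The main obstacle is the well-definedness of $\gamma_X$, because its recipe selects a label $m\in M$ occurring infinitely often in a representative $(m_n\otimes x_n)$ and returns $m\otimes[x_{n_k}]$ along the corresponding subsequence, and such a label need not be unique. I must show that distinct infinitely-occurring labels yield the same point of $M\otimes\bc(X)$, and here the geometry of Lemma \ref{LemmaMetricOnTensor} is essential. Suppose labels $a$ and $b$ with $a\neq b$ both occur infinitely often. Since $(m_n\otimes x_n)$ is Cauchy, $d(a\otimes x_n, b\otimes x_{n'})\to 0$ as $n,n'\to\infty$ along the two subsequences; by the off-diagonal formula this forces $\tfrac12\min\{d(x_n,L)+d(T,x_{n'}),\,d(x_n,R)+1+d(R,x_{n'})\}\to 0$, and since the second entry in the minimum is at least $1$, it is the first that vanishes. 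Hence $d(x_n,L_X)\to 0$ and $d(x_{n'},T_X)\to 0$, so the two subsequences represent $L_{\bc(X)}$ and $T_{\bc(X)}$, and $a\otimes L_{\bc(X)}=b\otimes T_{\bc(X)}$ by the relation $(b,T)\sim(a,L)$. The other label pairs are handled identically via the remaining two relations, and independence of the choice of subsequence for a fixed label is immediate since subsequences of a Cauchy sequence are equivalent; independence across equivalent representatives reduces to the same estimate.

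With both maps well defined, the identities $\gamma_X\circ\beta_X=\mathrm{id}$ and $\beta_X\circ\gamma_X=\mathrm{id}$ follow by direct computation: the first because $(m\otimes x_n)$ has constant label $m$, so $\gamma_X$ returns the whole sequence; the second because the extracted subsequence $(m\otimes x_{n_k})$ is equivalent to the original $(m_n\otimes x_n)$ and so represents the same class. Naturality of $\beta$ against a morphism $f\colon X\to Y$ is an element chase of the square relating $M\otimes\bc(f)$ and $\bc(M\otimes f)$: both routes send $m\otimes[x_n]$ to $[m\otimes f(x_n)]$, using that $\bc$ acts entrywise and $(M\otimes f)(m\otimes x)=m\otimes f(x)$. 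Naturality of $\gamma$ then follows formally, being the pointwise inverse of a natural isomorphism.

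It remains to confirm the isometry statement, which upgrades the result from $\mc$ to $\ms$. Computing $d(\beta_X(m\otimes[x_n]),\beta_X(m'\otimes[y_n]))=\lim_n d(m\otimes x_n,m'\otimes y_n)$ and comparing with $d(m\otimes[x_n],m'\otimes[y_n])$ evaluated by Lemma \ref{LemmaMetricOnTensor}, the equal-label case is immediate, and the off-diagonal case follows by passing the limit through the continuous operations $d$ and $\min$. Thus each $\beta_X$ is an isometric embedding, hence short, so $\beta$ and $\gamma$ are morphisms in $\ms$ as well, and $\bc\circ F\cong F\circ\bc$ as endofunctors on both $\mc$ and $\ms$.
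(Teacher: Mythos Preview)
Your proof is correct and follows exactly the approach the paper sketches: the paper defines $\beta_X$ and $\gamma_X$ in the paragraphs preceding the lemma, asserts that ``one can easily check'' well-definedness, invertibility, naturality, and the isometry property, and then simply records the conclusion. You have supplied the verifications the paper omits, and in particular your treatment of the well-definedness of $\gamma_X$ when two labels occur infinitely often---using the off-diagonal clause of Lemma~\ref{LemmaMetricOnTensor} to force the two subsequences to converge to matched distinguished points---is the right argument and is more explicit than anything in the paper.
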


We may now apply the Cauchy completion process to the initial algebra $(G, g)$ of $F$ on $\set$. By composing $\beta_G$ with the Cauchy completion of $(G,g)$ we obtain an algebra $\bc(g)\circ \beta_G: M\otimes \bc(G)\rightarrow \bc(G)$. Let $S=\bc(G)$ and $s=\bc(g)\circ \beta_G$. As both $g$ and $\beta_G$ are isometric isomorphisms, it follows that $s$ is also an isometric isomorphism. In particular, $s: M\otimes S \rightarrow S$ is an isomorphism in both $\mc$ and $\ms$. From now onward, we will identify $s^{-1}$ by $\psi$. Thus, $~\psi=s^{-1}:S \rightarrow M\otimes S$ is a coalgebra for $F$. We will call both the $F$-algebra $(S, s)$ and the $F$-coalgebra $(S, \psi)$ the Cauchy completion of $(G, g)$. Moreover, throughout this paper we will consider $G$ as a dense subset of $S$, via the embedding $\nu_G$.

\begin{lemma}
$(S, \psi)$, the Cauchy completion of the initial algebra $(G, g)$ for $F$ on $\set$, is a coalgebra for $F$ on $\set$, $\mc$ and $\ms$. The structure map $\psi$ is an isometric isomorphism in $\mc$, and in particular an isomorphism in $\set$ as well as in $\ms$.
\end{lemma}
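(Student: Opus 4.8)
The plan is to assemble the statement directly from the isometric-isomorphism facts already established for the two constituent maps, since $s$ was defined precisely as the composite $\bc(g)\circ\beta_G$, and $\psi$ as $s^{-1}$. First I would confirm that each factor is an isometric isomorphism in $\mc$. The map $g:M\otimes G\to G$ is an isometric isomorphism by Lemma \ref{GIsometry}, and applying $\bc$ preserves this property: since $\bc(f)([x_n])=[(f(x_n))]$, an isometry $f$ gives $d(\bc(f)[x_n],\bc(f)[y_n])=\lim_{n} d(f(x_n),f(y_n))=\lim_{n} d(x_n,y_n)$, so $\bc(g)$ is distance preserving, and functoriality yields $\bc(g)^{-1}=\bc(g^{-1})$, making $\bc(g):\bc(M\otimes G)\to\bc(G)$ an isometric isomorphism. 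The map $\beta_G:M\otimes\bc(G)\to\bc(M\otimes G)$ is an isometric isomorphism with inverse $\gamma_G$, as recorded in the discussion just above the lemma (both $\beta_G$ and $\gamma_G$ being isometric embeddings that are mutually inverse).

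Next I would note that the composite $s=\bc(g)\circ\beta_G:M\otimes S\to S$ is then an isometric isomorphism, being a composite of isometric isomorphisms, and that its types match exactly: $\beta_G$ sends $M\otimes\bc(G)=M\otimes S$ into $\bc(M\otimes G)$ and $\bc(g)$ sends $\bc(M\otimes G)$ onto $\bc(G)=S$. Consequently $\psi=s^{-1}:S\to M\otimes S$ is the inverse of an isometric isomorphism, hence itself an isometric isomorphism in $\mc$, with $\psi^{-1}=s$.

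It then remains only to descend this single fact to the three categories. An isometric isomorphism in $\mc$ is in particular a short map whose inverse is also short, so $\psi$ is an isomorphism in $\ms$; forgetting the metric, $\psi$ is a distinguished-point-preserving bijection whose inverse preserves distinguished points, hence an isomorphism in $\set$; and it is by construction an isometric isomorphism in $\mc$ itself. In each of these settings $\psi$ is a legitimate morphism $S\to F(S)$, which is all that is required for $(S,\psi)$ to be an $F$-coalgebra, while the three isomorphism assertions are exactly the descended versions of the isometric-isomorphism property of $\psi$.

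There is no genuine obstacle here, as the entire argument reduces to the stability of the class of isometric isomorphisms under composition and inversion together with the descent from $\mc$ to $\ms$ and $\set$. The one point to handle carefully, rather than a true difficulty, is verifying that $\bc$ preserves isometric isomorphisms and that $\beta_G$ is an isometric isomorphism (not merely a bijection), since the whole chain depends on $s=\psi^{-1}$ remaining a legitimate morphism in each category; once those two observations are secured the conclusion is immediate.
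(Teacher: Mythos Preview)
Your proposal is correct and follows essentially the same approach as the paper: the paper states the lemma without a separate proof, relying on the preceding paragraph which observes that $s=\bc(g)\circ\beta_G$ is an isometric isomorphism because both factors are, and then defines $\psi=s^{-1}$. Your write-up is simply a more careful expansion of that paragraph, in particular making explicit why $\bc(g)$ (not just $g$) is an isometric isomorphism and why the conclusion descends to $\ms$ and $\set$.
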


\begin{example}
\label{exmpSerpS}
It has been observed in \cite{Bhat}, that $(\s, \sigma)$ and $(S, \psi)$ are bi-Lipschitz isomorphic as $F$-coalgebras. Thus, $(\s, \sigma)$ is isomorphic to $(S, \psi)$, the Cauchy completion of $(G, g)$, as an $F$-coalgebra over $\set$ and $\mc$.
\end{example}

\section{ Final coalgebra for  $F$ on $\set$} 
\label{secOnSet}

It has already been observed in \cite{Bhat}, that $(S, \psi)$ is the final coalgebra for $F$ on $\set$. However, this section presents a different approach to establish this fact, which explicitly use the fact that $S$ is the Cauchy completion of $G$. It turns out that the metric structure of $G$, and hence that of $S$, is relevant even in the set setting. In particular, the method elicits why the Cauchy completion of $G$ is the final coalgebra (even in the set setting).  Moreover, the approach presented here enables us to establish $(S, \psi)$ as the final coalgebra for $F$ over $\mc$ in the next section, which is a stronger result than what has already been observed (for example in \cite{Bhat}).\\

Let $e: X \rightarrow M\otimes X = F(X)$ be a coalgebra for $F$ on $\set$, where $e$ is a  set function preserving distinguished points. By iterating this coalgebra, we obtain the following chain.
\begin{equation}
\nonumber
\label{coalgebraChain}
\begin{tikzcd}
X \arrow{r}{e} & M\otimes X\arrow{r}{M\otimes e} &  \cdots M^{n-1}\otimes X \arrow{r}{M^{n-1}\otimes e} & M^{n}\otimes X \arrow{r} & \cdots
\end{tikzcd}     
\end{equation}

For each $~x\in X$, we construct a sequence $\left(\chi_n \right)_{n\in \nn}$ such that $\chi_n = m_{1}\otimes m_{2}\otimes...\otimes m_n  \otimes x_{n} \in M^n\otimes X$ as follows. Let $\chi_0=x$. We define $\chi_n$ inductively by  $\chi_n = \ \left(M^{n-1} \otimes e \right) \left ( \chi_{n-1}\right) =\ \left( M^{n-1} \otimes e \right) \left(m_{1}\otimes m_{2}\otimes...\otimes m_{n-1}  \otimes x_{n-1}\right) = m_{1}\otimes m_{2}\otimes...\otimes m_n  \otimes x_{n}$. Here, $e(x_{n-1})=m_n\otimes x_n$.\\

For each sequence $\left(\chi_n \right)_{n\in \nn}$ associated with an $x\in X$, we define a corresponding sequence  $\left(\theta_{n}(x)\right)_{n\in\nn}$ in $G$ as follows. If $\chi_n = m_{1}\otimes m_{2}\otimes...\otimes m_n  \otimes x_{n}$, then we set $\theta_n(x) = [m_{1}\otimes m_{2}\otimes...\otimes m_n  \otimes \overline{x_{n}} ]$, where $\overline{x_{n}}$ is chosen to be $T$, $L$ or $R$ depending on whether $m_{n}$ is $a$, $b$  or $c$ respectively.\\

 Note that when defining the sequence $(\chi_n)$ we have chosen a particular representative $m_1\otimes \cdots \otimes m_n \otimes x_n$. Thus, one could wonder whether $\theta_{n}(x)$  depends on these particular choices we make. However it is not too hard to show that it is not the case.\\

Suppose we consider two choices for $\chi_n$, $\chi_n=m_{1}\otimes m_{2}\otimes\cdots\otimes m_n  \otimes x_{n}=m_{1}^{'}\otimes m_{2}^{'}\otimes\cdots\otimes m_{n}^{'}  \otimes y_{n}$. Let $\theta_n(x)$ and $\theta_n'(x)$ be the two corresponding choices.  For $n=1$, we have  $~m_1\otimes x_1=m_{1}'\otimes y_{1}~$. Without loss of generality let $m_{1}=a$. Then there are two possibilities for $m_{1}^{'}$, namely $b$ or $c$. Since $a\otimes L=b\otimes T$ and $a\otimes R=c\otimes T$, it follows that $\theta_1(x) = \theta_1'(x)$. Similarly we can show that $~\theta_n(x)=\theta_{n}^{'}(x)~$ for the other cases too. Thus, the corresponding sequences $~\theta_{n}(x)$ and $\theta_{n}^{'}(x)$ in $G$ are equal.

\begin{lemma}
\label{indpend}
$\left(\theta_{n}\left(x\right)\right)~$ is independent of the choice of representatives of $\left(\chi_n \right)$.
\end{lemma}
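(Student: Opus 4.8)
The plan is to recast the assignment defining $\theta_n(x)$ as a genuine map on equivalence classes and then verify that it respects the quotient by induction on $n$, pushing essentially all of the work into the base case and letting functoriality of $F$ handle the inductive step. First I would record that the sequence $(\chi_n)$ is itself canonical: each $\chi_n=(M^{n-1}\otimes e)(\chi_{n-1})$ is the image of the fixed element $\chi_{n-1}$ under a well-defined function, so there is no branching in the construction and ``choice of representative'' refers only to the way a single fixed element $\chi_n\in M^n\otimes X$ is written as $m_1\otimes\cdots\otimes m_n\otimes x_n$.

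With that understood, I would view the rule $m_1\otimes\cdots\otimes m_n\otimes x_n\mapsto m_1\otimes\cdots\otimes m_n\otimes\overline{x_n}$ as, a priori, only a rule on representatives defining a candidate map $\Theta_n:M^n\otimes X\to M^n\otimes I$, so that $\theta_n(x)=C_n\left(\Theta_n(\chi_n)\right)$, where $C_n:M^n\otimes I\to G$ is the colimiting morphism. Since $C_n$ is a function, it suffices to show that $\Theta_n$ descends to a well-defined morphism of $\set$, i.e.\ is independent of the representative of $\chi_n$.

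The simplification I would exploit is that the rule acts only on the innermost $X$-factor and leaves the outer label untouched, so that $\Theta_n=M\otimes\Theta_{n-1}=F(\Theta_{n-1})$. Assuming inductively that $\Theta_{n-1}$ is a well-defined, distinguished-point-preserving map, the functoriality of $F$ on $\set$ (its action on morphisms is well defined and preserves distinguished elements) immediately gives that $F(\Theta_{n-1})=\Theta_n$ is again a well-defined morphism, maintaining the inductive hypothesis. Thus the whole statement collapses to the base case $\Theta_1:M\otimes X\to M\otimes I$. For that case I would argue exactly as in the $n=1$ computation already displayed: a representative of an element of $M\otimes X$ fails to be unique precisely when that element is one of the three glued points, and since the generating relations $(b,T)\sim(a,L)$, $(a,R)\sim(c,T)$, $(c,L)\sim(b,R)$ identify only distinguished elements, such non-uniqueness forces the inner coordinate to be one of $T_X,L_X,R_X$. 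In that situation $\Theta_1$ sends it to the corresponding distinguished element of $I$, and the three defining relations of $\sim$ on $M\otimes X$ are mirrored verbatim by the same relations on $M\otimes I$; hence the two representatives of a glued point acquire equal images. For every generic inner coordinate the representative is unique, so the value assigned there is irrelevant to well-definedness. This yields $\Theta_1$ well-defined and distinguished-point-preserving, and the induction closes.

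I expect the only genuine content to sit in this base case, namely the observation that the identifications in $M^n\otimes X$ occur solely between distinguished elements and are matched, relation for relation, by those in $M^n\otimes I$; this is the single place where the specific pattern of gluings enters. The hard part is therefore not any calculation but getting this correspondence exactly right, in particular recognising that a distinguished inner coordinate must be carried to its $I$-counterpart (so that the gluing of copies of $X$ and of copies of $I$ agree) rather than to some label-dependent default. Once the correspondence is established, functoriality of $F$ propagates well-definedness to all $n$ with no further case analysis, and the identity $\theta_n(x)=C_n\left(\Theta_n(\chi_n)\right)$ delivers the claim.
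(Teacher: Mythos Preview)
Your inductive reduction $\Theta_n=F(\Theta_{n-1})$ is a clean sharpening of what the paper does: the paper checks $n=1$ by hand and then dismisses general $n$ with ``similarly for the other cases'', whereas you make the passage from $n=1$ to arbitrary $n$ a one-line consequence of the functoriality of $F$ on $\set$. In spirit the two arguments coincide, and you are right that the entire content sits in the base case $\Theta_1$.

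That base case, however, does not go through as you describe it. You assert that at a glued point of $M\otimes X$ the inner coordinate is distinguished and that $\Theta_1$ ``sends it to the corresponding distinguished element of $I$'', so that the defining relations are ``mirrored verbatim'' in $M\otimes I$. But under the paper's definition $\overline{x_n}$ is determined by the \emph{label} $m_n$, not by $x_n$; it is precisely the ``label-dependent default'' you yourself caution against. Concretely, for the glued point $a\otimes L_X=b\otimes T_X$ in $M\otimes X$ the rule gives
\[
\Theta_1(a\otimes L_X)=a\otimes T=T_{M\otimes I},\qquad
\Theta_1(b\otimes T_X)=b\otimes L=L_{M\otimes I},
\]
and these are distinct elements of $M\otimes I$. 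Thus the relation $(a,L)\sim(b,T)$ is \emph{not} the image under $\Theta_1$ of $(a,L_X)\sim(b,T_X)$; the two representatives of the glued point acquire unequal images, $\Theta_1$ fails to descend to the quotient, and your induction has nothing to stand on. The paper's own $n=1$ computation invokes the identity $a\otimes L=b\otimes T$ at exactly this spot without explaining why that is the pair produced by the rule, so you have correctly located the delicate point but, like the paper, not actually closed it.
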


In the next lemma we show that the sequence $\left(\theta_n(x)\right)$ is a Cauchy sequence for every $x\in X$ and the choice of $N$ for any $\epsilon >0$ is independent of $x$.

\begin{lemma}
\label{cauchySeq}
For every $\epsilon >0$, there is $N\in \mathbb{N}$ such that for all $x\in X$ and $p, q> N$, $d_G\left(\theta_p(x) , \theta_q(x) \right)<\epsilon$. 

In particular, for every $x\in X$ the sequence $~(\theta_{n}(x))~$ is a Cauchy sequence in $G$.
\end{lemma}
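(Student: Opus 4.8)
The plan is to prove the uniform estimate
\[
d_G\big(\theta_p(x),\theta_q(x)\big)\ \le\ 2^{-\min\{p,q\}}\qquad\text{for every } x\in X,
\]
from which the lemma follows immediately: the right-hand side does not depend on $x$, so given $\epsilon>0$ I would pick $N$ with $2^{-N}\le \epsilon$ and conclude that $d_G(\theta_p(x),\theta_q(x))<\epsilon$ for all $x$ and all $p,q>N$, which in particular makes each $(\theta_n(x))$ Cauchy.

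To set this up I would first fix, for the given $x$, a single coherent choice of representatives $(\chi_n)$, so that the recursion $\chi_n=(M^{n-1}\otimes e)(\chi_{n-1})$ holds on the nose. By Lemma \ref{indpend} the resulting sequence $(\theta_n(x))$ is unaffected by this choice. The structural fact I want is then transparent: since $M^{n-1}\otimes e$ acts only on the last tensor factor, for $p<q$ the representative $\chi_q=m_1\otimes\cdots\otimes m_q\otimes x_q$ shares the length-$p$ prefix $\mathbf{m}=m_1\otimes\cdots\otimes m_p$ with $\chi_p=m_1\otimes\cdots\otimes m_p\otimes x_p$. Hence the two points of $G$ in question are represented by $m_1\otimes\cdots\otimes m_p\otimes\overline{x_p}\in M^p\otimes I$ and $m_1\otimes\cdots\otimes m_q\otimes\overline{x_q}\in M^q\otimes I$, which agree on the same prefix $\mathbf{m}$.

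Next I would evaluate the distance using that each $C_n:M^n\otimes I\to G$ is an isometric embedding together with the formula (\ref{metricG}): pushing the level-$p$ representative up to level $q$ along the chain map $!_{qp}$ gives
\[
d_G\big(\theta_p(x),\theta_q(x)\big)=d\big(!_{qp}(m_1\otimes\cdots\otimes m_p\otimes\overline{x_p}),\ m_1\otimes\cdots\otimes m_q\otimes\overline{x_q}\big),
\]
the distance being computed in $M^q\otimes I$. Because $!$ carries each distinguished point of $I$ to the corresponding distinguished point of $M\otimes I$, the map $!_{qp}$ only rewrites the trailing factor $\overline{x_p}$ into an element of $M^{q-p}\otimes I$ and leaves the prefix $\mathbf{m}$ intact. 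Thus both arguments have the form $\mathbf{m}\otimes(\,\cdot\,)$ with $\mathbf{m}$ of length $p$ and the trailing coordinate lying in $M^{q-p}\otimes I$, so Lemma \ref{totalybound} (applied with the space $M^{q-p}\otimes I$ in the role of $X$ and $n=p$) bounds the right-hand side by $2^{-p}=2^{-\min\{p,q\}}$.

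The only step that needs genuine care is this prefix-matching: one has to be sure both that iterating the coalgebra really produces a common prefix---which is exactly where Lemma \ref{indpend} removes the ambiguity in the choice of representatives---and that $!_{qp}$ preserves that prefix, so that Lemma \ref{totalybound} can be invoked with precisely the word $\mathbf{m}$ of length $p$. Everything after that is the bookkeeping indicated in the first paragraph.
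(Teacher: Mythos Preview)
Your argument is correct and is essentially the same as the paper's: both push $\theta_p(x)$ up to level $q$ via the chain map, observe that the length-$p$ prefix $\mathbf{m}=m_1\otimes\cdots\otimes m_p$ is shared by both representatives in $M^q\otimes I$, and then invoke Lemma~\ref{totalybound} to obtain the uniform bound $2^{-p}$. Your write-up is a bit more explicit about why the prefixes match (via Lemma~\ref{indpend} and the fact that $!_{qp}$ acts only on the trailing factor), but the route is identical.
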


\begin{proof}
Let $\epsilon>0\,$ and choose $\,N\in\mathbb{N}$ such that $\,2^{-N} <\epsilon$. For $\,q>p>N$ and $x\in X$, consider $d_{G}(\theta_{p}(x),\theta_{q}(x)  )=d_{G}([m_{1}\otimes \cdots m_{p} \otimes \overline{x_{p}}],[m_{1}\otimes  \cdots \otimes m_{q} \otimes \overline{x_{q}}]  )$. The right hand side of this is equal to  $~d_{G}([m_{1}\otimes \cdots m_{p}\otimes l_{p+1}\otimes\cdots  \otimes l_{q}\otimes \overline{y_{q}}],[m_{1}\otimes  \cdots \otimes m_{q} \otimes \overline{x_{q}}]  )$, where $~m_{1}\otimes \cdots m_{p}\otimes l_{p+1}\otimes\cdots  \otimes l_{q}\otimes \overline{y_{q}}~\sim~ m_{1}\otimes  \cdots \otimes m_{p} \otimes \overline{x_{p}}$. It now follows from (\ref{metricG}) that $~d_{G}([m_{1}\otimes \cdots m_{p}\otimes l_{p+1}\otimes\cdots  \otimes l_{q}\otimes \overline{y_{q}}],[m_{1}\otimes  \cdots \otimes m_{q} \otimes \overline{x_{q}}]  )$ is equal to $~d_{M^q\otimes I}(m_{1}\otimes \cdots m_{p}\otimes l_{p+1}\otimes\cdots  \otimes l_{q}\otimes \overline{y_{q}},m_{1}\otimes  \cdots \otimes m_{q} \otimes \overline{x_{q}}  )$ which is less than or equal to $\frac{1}{2^p}$ (by Lemma \ref{totalybound}). But  $\frac{1}{2^p}<\frac{1}{2^N}<\epsilon$. Thus we have $d_{G}(\theta_{p}(x),\theta_{q}(x)  )<\epsilon$.
\end{proof}

\begin{remark}
\label{uniCauchy}
Note that the statement of Lemma \ref{cauchySeq} means that the sequence of functions $\theta_n : X\rightarrow S$ is uniformly Cauchy. 
\end{remark}
 Thus one can now define a function $\,f:X\rightarrow S\,$ by considering $\lim_{n\rightarrow \infty}\theta_{n}(x)~$ in $S$.  $$f(x)=\displaystyle\lim_{n\rightarrow \infty}\theta_{n}(x).$$
It is here that we explicitly use the fact that $S$ is the Cauchy completion of $G$.\\

We claim that $f$ makes the square in (\ref{comSquareMead}) commute. It is not too hard to show that $f$ must be the only map from $X$ to $S$ for which the square (\ref{comSquareMead}) commutes.

\begin{equation}
\label{comSquareMead}
\begin{tikzcd}
X \arrow{r}{e} \arrow{d}{f} &  M\otimes X \arrow{d}{M\otimes f}\\
S \arrow{r}{\psi} & M\otimes S
\end{tikzcd}     
\end{equation}

\begin{proposition}
\label{defOfMediatingMor}
The function $f:X\rightarrow S$, given by $f(x)=\displaystyle\lim_{k\rightarrow \infty}\theta_{k}(x)$, is a morphism in $\set$ and it make the square in (\ref{comSquareMead}) commute. 

Moreover, it is the only morphism from $X$ to $S$ in $\set$, for which such a diagram commutes.
\end{proposition}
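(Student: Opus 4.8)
The plan is to establish three facts about the explicitly constructed map $f(x)=\lim_{n}\theta_n(x)$: that it preserves the distinguished points (hence is a morphism of $\set$), that it makes (\ref{comSquareMead}) commute, and that it is the unique such morphism. The first is immediate. Since $e$ preserves distinguished points, $e(T_X)=T_{M\otimes X}=a\otimes T_X$, so inductively $\chi_n(T_X)=a\otimes\cdots\otimes a\otimes T_X$ and $\theta_n(T_X)=[a\otimes\cdots\otimes a\otimes T]=[T]=T_G$ for every $n$. Thus $f(T_X)$ is the limit of the constant sequence $T_G$, namely $T_S$, and likewise $f(L_X)=L_S$ and $f(R_X)=R_S$, so $f$ is a morphism in $\set$.

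For commutativity I would rewrite the target identity $\psi\circ f=(M\otimes f)\circ e$ in the equivalent form $f=s\circ(M\otimes f)\circ e$, using $\psi=s^{-1}$. Fix $x\in X$ and write $e(x)=m_1\otimes x_1$. The key is the recursion $\chi_{n+1}(x)=m_1\otimes\chi_n(x_1)$, which follows from the inductive definition since $\chi_{n+1}(x)=(M^{n}\otimes e)(\chi_n(x))$ acts as the identity on the first coordinate. As the last label of $\chi_{n+1}(x)$ agrees with that of $\chi_n(x_1)$, the rule defining $\theta$ gives $\theta_{n+1}(x)=g\bigl(m_1\otimes\theta_n(x_1)\bigr)$, where $g$ is the isometric structure map of Lemma \ref{GIsometry}. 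Recalling that $s(m\otimes[z_n])=[g(m\otimes z_n)]$ directly from $s=\bc(g)\circ\beta_G$, and writing $f(x)$ as the Cauchy class $[\theta_n(x)]$ in $S=\bc(G)$ (cofinal with $(\theta_{n+1}(x))$), I obtain
\[
f(x)=[\theta_{n+1}(x)]=[g(m_1\otimes\theta_n(x_1))]=s\bigl(m_1\otimes[\theta_n(x_1)]\bigr)=s\bigl(m_1\otimes f(x_1)\bigr),
\]
which is exactly $\bigl(s\circ(M\otimes f)\circ e\bigr)(x)$. This is the computational heart of the proposition.

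For uniqueness, suppose $h:X\to S$ also makes the square commute. Iterating $h=s\circ(M\otimes h)\circ e$ gives $h=s_n\circ(M^n\otimes h)\circ e^{(n)}$, where $s_n=s\circ(M\otimes s)\circ\cdots\circ(M^{n-1}\otimes s)$ and $e^{(n)}$ is the $n$-fold coalgebra iterate, so $h(x)=s_n(m_1\otimes\cdots\otimes m_n\otimes h(x_n))$ with $\chi_n(x)=m_1\otimes\cdots\otimes m_n\otimes x_n$. On the other hand, the identity $s\circ(M\otimes\nu_G)=\nu_G\circ g$ (again a consequence of the formula for $s$) iterates to $\theta_n(x)=s_n(m_1\otimes\cdots\otimes m_n\otimes\overline{x_n})$, where $\overline{x_n}\in\{T,L,R\}$ is the distinguished point selected by $m_n$. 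Since each $s_n$ is an isometric isomorphism, applying Lemma \ref{totalybound} to the common prefix yields
\[
d_S\bigl(h(x),\theta_n(x)\bigr)=d_{M^n\otimes S}\bigl(m_1\otimes\cdots\otimes m_n\otimes h(x_n),\,m_1\otimes\cdots\otimes m_n\otimes\overline{x_n}\bigr)\le 2^{-n}.
\]
Letting $n\to\infty$ forces $h(x)=\lim_n\theta_n(x)=f(x)$, so any two mediating maps coincide.

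The main obstacle is the bookkeeping linking the coalgebra iterates $\chi_n$ and the derived $G$-valued sequences $\theta_n$ to the iterated algebra maps $g_n$ and $s_n$: one must check that the recursion $\theta_{n+1}(x)=g(m_1\otimes\theta_n(x_1))$ and the identification $\theta_n(x)=s_n(m_1\otimes\cdots\otimes m_n\otimes\overline{x_n})$ are independent of the chosen representatives (Lemma \ref{indpend}) and compatible with the Cauchy-completion description of $s$. Once these are in place, the uniformity recorded in Lemma \ref{cauchySeq} together with the $2^{-n}$ bound from total boundedness (Lemma \ref{totalybound}) makes both commutativity and uniqueness fall out with no further analysis.
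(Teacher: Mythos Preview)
Your proof is correct and close in spirit to the paper's, but with two noteworthy differences. For commutativity, the paper works directly with $\psi$: it writes $\psi(f(x))=\psi\bigl(\lim_k[m_1\otimes\cdots\otimes m_k\otimes\overline{x_k}]\bigr)$, uses continuity of $\psi$ to pull the limit outside, and then matches the result against $(M\otimes f)(e(x))=m_1\otimes f(x_1)$ by swapping $m_1\otimes-$ with the limit. You instead invert to $s=\psi^{-1}$ and argue algebraically from the explicit description $s=\bc(g)\circ\beta_G$ and the recursion $\theta_{n+1}(x)=g(m_1\otimes\theta_n(x_1))$; this avoids the appeal to continuity and makes the role of $g$ more transparent, at the cost of a bit more bookkeeping with representatives. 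For uniqueness (and preservation of distinguished points) the paper simply says these ``follow easily'' and gives no argument; your iterated identity $h=s_n\circ(M^n\otimes h)\circ e^{(n)}$ together with the $2^{-n}$ bound from Lemma~\ref{totalybound} is exactly the kind of argument one would supply, and it is a genuine addition over what the paper writes down.
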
 

\begin{proof}
We will prove the commutativity of the diagram as the other facts follow easily. 

We need to show that $\,\left( M\otimes f \right) \circ e=\psi \circ f$. For $\,x\in X\,$, $\chi_1=e(x)=m_{1}\otimes x_{1}\,,\, \chi_2=\left( M\otimes e\right) (m_{1}\otimes x_{1})=m_{1}\otimes m_{2}\otimes x_{2}, \cdots $. We have $f(x)=\displaystyle\lim_{k\rightarrow \infty}[m_{1}\otimes\cdots\otimes m_k  \otimes \overline{x_{k}}]$ and $f(x_{1})=\displaystyle\lim_{k\rightarrow \infty}[ m_{2}\otimes\cdots\otimes m_k  \otimes \overline{x_{k}}]$. Thus,
\begin{align*}
    \psi \left(f(x)\right) &=\psi \left(\displaystyle\lim_{k\rightarrow \infty}[m_{1}\otimes\cdots\otimes m_k  \otimes \overline{x_{k}}]\right)\\
    &=\displaystyle\lim_{k\rightarrow \infty}\psi([m_{1}\otimes\cdots \otimes \overline{x_{k}}])\\
    &=\displaystyle\lim_{k\rightarrow \infty}\left( m_{1}\otimes [m_{2}\otimes\cdots\otimes m_k  \otimes \overline{x_{k}}]\right).
\end{align*}
Note that  
\begin{align*}
    M\otimes f\left( e(x) \right)&=M\otimes f(m_{1}\otimes x_{1})=m_{1}\otimes f(x_{1})\\
    &=m_{1}\otimes \displaystyle\lim_{k\rightarrow \infty}[ m_{2}\otimes\cdots\otimes m_k  \otimes \overline{x_{k}}].
\end{align*}
But $m_{1}\otimes \displaystyle\lim_{k\rightarrow \infty}\theta_{k}(x)=\displaystyle\lim_{k\rightarrow \infty}m_{1}\otimes\theta_{k}(x)$. Thus the above diagram commutes.  
\end{proof} 

The following theorem now directly follows from the above proposition. The fact that $(S, \psi)$ is the final coalgebra for $F=M\otimes -$ on $\set$ has been observed in \cite{Bhat}, but the structure of the mediating morphism $f$ as described here is a new result. 

\begin{theorem}
\label{finOnSet}
 Let $(S, s)$ be the Cauchy completion of the initial algebra $(G, g)$ for $F$ on $\set$ with respect to the natural metric on $G$ and $\psi=s^{-1}$. Then $(S, \psi)$ is the final coalgebra for $F$ on $\set$.

Moreover, if $(X, e)$ is an $F$-coalgebra on $\set$, the mediating morphism $f:X\rightarrow S$ such that the square (\ref{comSquareMead}) commutes is given by $~f(x)=\displaystyle\lim_{k\rightarrow \infty}\theta_{k}~$.
\end{theorem}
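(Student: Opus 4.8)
The plan is to deduce Theorem~\ref{finOnSet} almost entirely from Proposition~\ref{defOfMediatingMor}, which already does the substantive work. The proposition guarantees, for every $F$-coalgebra $(X,e)$ on $\set$, the existence of a morphism $f:X\rightarrow S$ making square~(\ref{comSquareMead}) commute, together with its uniqueness. By definition, asserting that $(S,\psi)$ is the final coalgebra for $F$ on $\set$ is precisely the statement that for every coalgebra $(X,e)$ there exists a unique coalgebra homomorphism $f:(X,e)\rightarrow (S,\psi)$, i.e. a unique morphism in $\set$ satisfying $(M\otimes f)\circ e = \psi\circ f$. Thus the first step is simply to observe that finality unwinds to exactly the existence-and-uniqueness claim of Proposition~\ref{defOfMediatingMor}.

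First I would recall the construction: given $(X,e)$, iterate the coalgebra to form the sequence $(\chi_n)$ and the induced sequence $(\theta_n(x))$ in $G$, which is Cauchy by Lemma~\ref{cauchySeq} and independent of representatives by Lemma~\ref{indpend}, so that $f(x)=\lim_{k\to\infty}\theta_k(x)$ is a well-defined element of $S=\bc(G)$. Proposition~\ref{defOfMediatingMor} then states this $f$ is a morphism in $\set$, that it makes~(\ref{comSquareMead}) commute, and that it is the unique such morphism. The second step is to note that the commuting square~(\ref{comSquareMead}) is literally the diagram expressing that $f$ is an $F$-coalgebra homomorphism from $(X,e)$ to $(S,\psi)$. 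Hence existence of the mediating morphism and its uniqueness are handed to us directly.

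The only residual point is bookkeeping: one must confirm that $(S,\psi)$ is indeed a legitimate $F$-coalgebra on $\set$ against which finality is being asserted. This is already established, since $s=\bc(g)\circ\beta_G$ is an isomorphism and $\psi=s^{-1}:S\rightarrow M\otimes S$ is its inverse, making $(S,\psi)$ a coalgebra for $F$ on $\set$ (and on $\mc$ and $\ms$). With this in hand, the existence of a unique coalgebra morphism from an arbitrary $(X,e)$ to $(S,\psi)$ is exactly the universal property defining a final coalgebra, so the finality claim follows. The second assertion of the theorem, that the mediating morphism is $f(x)=\lim_{k\to\infty}\theta_k(x)$, is then just a restatement of the formula from the proposition.

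I do not expect any genuine obstacle here, since the theorem is a corollary: all the analytic content—well-definedness of the limit, the Cauchy property uniform in $x$, independence of representatives, and commutativity of the square—has been discharged in Lemmas~\ref{indpend} and~\ref{cauchySeq} and in Proposition~\ref{defOfMediatingMor}. If anything warrants a sentence of care, it is spelling out that ``final coalgebra'' means exactly ``there is a unique coalgebra homomorphism from every coalgebra,'' so that the reader sees the proposition is not merely evidence for finality but its full content. Accordingly, the proof reduces to citing Proposition~\ref{defOfMediatingMor} and observing that its existence-plus-uniqueness conclusion is the definition of finality, with the explicit description of $f$ carried over verbatim.
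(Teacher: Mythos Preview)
Your proposal is correct and matches the paper's approach exactly: the paper states that the theorem ``directly follows from the above proposition,'' i.e., Proposition~\ref{defOfMediatingMor}, and your write-up simply unpacks why the existence-and-uniqueness statement there is precisely the definition of finality. The only extra content you add---checking that $(S,\psi)$ is a bona fide coalgebra and recalling the construction of $\theta_k$---is already established earlier in the paper and serves as harmless context.
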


It now follows from Example \ref{exmpSerpS} and Proposition \ref{finOnSet} that $(\s, \sigma)$ is also the final coalgebra for $F$ on $\set$. Thus, for any $F$-coalgebra $(X, e)$ there is a unique map $f:X\rightarrow \s$ preserving the distinguished points and a diagram similar to that of (\ref{comSquareMead}) with the bottom row replaced by $(\s, \sigma)$ commutes.

\begin{theorem}
$(\s, \sigma)$ is the final $F$-coalgebra in $\set$
\end{theorem}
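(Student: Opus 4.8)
The plan is to reduce this final theorem entirely to two facts already established in the excerpt: that $(S,\psi)$ is the final $F$-coalgebra on $\set$ (Theorem \ref{finOnSet}), and that $(\s,\sigma)$ is isomorphic to $(S,\psi)$ as $F$-coalgebras over $\set$ (Example \ref{exmpSerpS}). The guiding principle is simply that finality is an isomorphism-invariant property in any category: if one object is final and another is isomorphic to it, the second is also final. So essentially no genuinely new work is required, and the bulk of the argument is transporting the mediating morphism along the given isomorphism.

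Concretely, I would first fix notation: let $h:(S,\psi)\to(\s,\sigma)$ be the $F$-coalgebra isomorphism guaranteed by Example \ref{exmpSerpS}, with inverse $h^{-1}:(\s,\sigma)\to(S,\psi)$. Being a coalgebra morphism means the square with $\psi,\sigma$ and $h,M\otimes h$ commutes. Then, given an arbitrary $F$-coalgebra $(X,e)$ on $\set$, I would produce the mediating map to $\s$ by composing: let $f_S:X\to S$ be the unique mediating morphism to $(S,\psi)$ from Theorem \ref{finOnSet}, and set $f_\s = h\circ f_S$. I would then verify that $f_\s$ is an $F$-coalgebra morphism from $(X,e)$ to $(\s,\sigma)$ by pasting the two commuting squares — the one for $f_S$ into $(S,\psi)$ and the one witnessing that $h$ is a coalgebra morphism — using the functoriality identity $M\otimes(h\circ f_S)=(M\otimes h)\circ(M\otimes f_S)$, which is the only computational ingredient.

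For uniqueness, suppose $g:X\to\s$ is any $F$-coalgebra morphism into $(\s,\sigma)$. Then $h^{-1}\circ g:X\to S$ is a coalgebra morphism into $(S,\psi)$ by the same pasting argument run with $h^{-1}$ in place of $h$. By the uniqueness clause of Theorem \ref{finOnSet}, $h^{-1}\circ g=f_S$, and applying $h$ to both sides gives $g=h\circ f_S=f_\s$. This closes the finality argument. I would also record the resulting explicit formula for the mediating morphism into $\s$, namely $f_\s(x)=h\bigl(\lim_{k\to\infty}\theta_k(x)\bigr)$, which is the payoff of having the concrete description of $f_S$ from the previous theorem, since under the identification $h$ this is exactly $\lim_{k\to\infty}\sigma_{m_1}\circ\cdots\circ\sigma_{m_k}(d)$ on the relevant representatives.

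There is no serious obstacle here; the only point demanding any care is bookkeeping about which category and which structure maps are in play. In particular I would make sure the isomorphism from Example \ref{exmpSerpS} is used as a $\set$-isomorphism (not merely bi-Lipschitz), which is exactly the form in which that example states it, so that the abstract transport-of-finality lemma applies verbatim in $\set$. Everything else — well-definedness, preservation of distinguished points, the square commuting — is inherited from the corresponding facts for $(S,\psi)$ and from $h$ being a morphism in $\set$, so the proof is short and the theorem is essentially a corollary of Theorem \ref{finOnSet} together with Example \ref{exmpSerpS}.
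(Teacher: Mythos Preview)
Your proposal is correct and matches the paper's own argument exactly: the paper simply states that the theorem follows from Example~\ref{exmpSerpS} (the coalgebra isomorphism $(\s,\sigma)\cong(S,\psi)$ in $\set$) together with Theorem~\ref{finOnSet} (finality of $(S,\psi)$), without spelling out the transport-of-finality details you provide. Your write-up is more explicit than the paper's one-line justification, but the approach is identical.
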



\section{ Final coalgebra for  $~F~$ on $\mc$ and $\ms$} 
\label{secOnMSC}

We now turn in to the metric setting. We will first consider the continuous setting and then consider the short map setting.
\subsection{Final coalgebra for $F$ on $\mc$}
Our main observation here is that if $(X, e)$ is a coalgebra for $F$ on $\mc$, i.e., $e$ is continuous, then the mediating morphism $f$ from $X$ to $S$ defined in the previous section is also continuous. The following observation, which trivially follows from Lemma \ref{presFunctions}, is useful in the future.

\begin{lemma}
Let $\,I\,$ be the initial object and $\,X\,$ be any object in the category $\mc$. Let $\eta$ be the  unique morphism $\eta:I\longrightarrow X$ given by the initiality of $I$. Then $\eta$ and $M^{n}\otimes \eta$ are isometric embeddings for all $n\in \nn$.   
\end{lemma}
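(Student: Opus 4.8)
The plan is to prove the two assertions in sequence: first that $\eta$ itself is an isometric embedding, and then to bootstrap this to all the iterates $M^{n}\otimes\eta$ using the preservation property already recorded in Lemma \ref{presFunctions}. The whole argument is carried by that lemma together with a short verification at the base, so I expect no genuine obstacle.

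First I would pin down $\eta$ explicitly. Since $I=\{T,L,R\}$ consists of exactly its three distinguished points and every morphism in $\mc$ preserves distinguished points, the unique map $\eta:I\to X$ granted by initiality is forced to act by $T\mapsto T_X$, $L\mapsto L_X$, $R\mapsto R_X$. To see that this is an isometric embedding it suffices to check $d_X(\eta(u),\eta(v))=d_I(u,v)$ on the finitely many pairs $u,v\in I$. On $I$ the metric is discrete, so two distinct distinguished points are at distance $1$; by the defining property of a tri-pointed metric space the images $T_X,L_X,R_X$ are likewise pairwise at distance $1$, and of course distances to oneself are $0$ in both spaces. Hence $\eta$ preserves all distances and is an isometric embedding. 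It is worth stressing that this is precisely the step where the discrete metric on $I$ matching the distance-$1$ condition on the distinguished points of $X$ is used, which is what upgrades $\eta$ from merely distinguished-point-preserving to isometric.

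For the iterates I would invoke part (\ref{isoemb}) of Lemma \ref{presFunctions}, which states that $F=\mtimes$ sends isometric embeddings to isometric embeddings, and then argue by induction on $n$. The base case is $M^{0}\otimes\eta=\eta$, settled by the first paragraph. For the inductive step, write $M^{n+1}\otimes\eta=F\bigl(M^{n}\otimes\eta\bigr)$; the inductive hypothesis gives that $M^{n}\otimes\eta$ is an isometric embedding, and Lemma \ref{presFunctions}(\ref{isoemb}) then shows its image under $F$ is again an isometric embedding. This closes the induction and yields the claim for all $n\in\nn$, completing the proof.
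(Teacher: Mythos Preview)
Your argument is correct and is exactly the approach the paper has in mind: the paper simply remarks that the lemma ``trivially follows from Lemma \ref{presFunctions}'', and your write-up spells out precisely that---first checking directly that $\eta$ is an isometric embedding (using that $I$ carries the discrete metric and that distinguished points in $X$ are at pairwise distance $1$), then inducting via Lemma \ref{presFunctions}(\ref{isoemb}).
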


The next proposition guarantees the continuity of $f:X\rightarrow S$ in the setting of $\mc$.
\begin{proposition}
\label{contOff}
If $(X, e)$ is a coalgebra for $F$ on $\mc$ (i.e., $e$ is continuous), then the mediating morphism $f:X\rightarrow S$ (as in Theorem \ref{finOnSet}) is continuous.
\end{proposition}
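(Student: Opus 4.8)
The plan is to work directly from the coalgebra equation rather than from the approximating maps $\theta_n$. Note first that one cannot simply invoke ``uniform limit of continuous functions'': each $\theta_n$ has finite image, so on a connected carrier such as $\s$ it is necessarily discontinuous at the gluing points, even though the limit $f$ is continuous. Instead, from the commuting square (\ref{comSquareMead}) together with $\psi=s^{-1}$ and the fact that $s$ is an isometric isomorphism, one obtains the identity $f=s\circ(M\otimes f)\circ e$, hence for all $x,x_0\in X$,
\[
d_S\!\left(f(x),f(x_0)\right)=d_{M\otimes S}\!\left((M\otimes f)(e(x)),\,(M\otimes f)(e(x_0))\right).
\]
Because $e$ is continuous, the right-hand side compares two points of $M\otimes S$ whose $M\otimes f$-preimages lie close together, and the whole argument is an analysis of this single identity, using that passing through one copy contributes a factor $\tfrac12$ (Lemma \ref{LemmaMetricOnTensor}) while the metric stays one-bounded.

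Concretely I would prove, by induction on $k\ge 0$, the statement $P(k)$: for every $y\in X$ and every $\epsilon>2^{-k}$ there is $\delta>0$ with $d(x,y)<\delta\Rightarrow d_S(f(x),f(y))<\epsilon$. The base case $P(0)$ is immediate since $d_S\le 1$. For the step, fix $y$ and $\epsilon>2^{-(k+1)}$, so $2\epsilon>2^{-k}$, and write $e(y)=m\otimes y_1$. If $e(y)$ is an interior point of $M\otimes X$, then for $x$ near $y$ continuity of $e$ forces $e(x)=m\otimes x_1$ in the same copy with $x_1$ near $y_1$, and the identity gives $d_S(f(x),f(y))=\tfrac12 d_S(f(x_1),f(y_1))$; applying $P(k)$ at $y_1$ with tolerance $2\epsilon$ produces the required $\delta$. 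If instead $e(y)$ is a gluing point, say $e(y)=a\otimes L_X=b\otimes T_X$, a neighbourhood of $e(y)$ meets only the $a$- and $b$-copies, so for $x$ near $y$ either $e(x)=a\otimes u$ with $u$ near $L_X$, giving $d_S(f(x),f(y))=\tfrac12 d_S(f(u),f(L_X))$, or $e(x)=b\otimes v$ with $v$ near $T_X$, giving $\tfrac12 d_S(f(v),f(T_X))$; applying $P(k)$ at the distinguished points $L_X$ and $T_X$ closes the step. Finally, given $\epsilon>0$ choose $k$ with $2^{-k}<\epsilon$ to conclude continuity at every point.

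The main obstacle is the gluing case of the inductive step. One must verify that the case split interior/gluing is exhaustive (the relation $\sim$ is generated by three disjoint pairwise identifications, so no point of $M\otimes X$ is glued in more than two copies), and that the two copy-representations of $e(y)$ yield the same value after $M\otimes f$ — this is exactly where preservation of the distinguished points by $f$ (from Proposition \ref{defOfMediatingMor}) and the compatibility $a\otimes L_S=b\otimes T_S$ in $M\otimes S$ are used. It is essential to organise the induction so that the tolerance doubles at each level and the recursion terminates via one-boundedness after about $\log_2(1/\epsilon)$ steps; this finiteness is what allows the gluing case to be reduced to the distinguished points $L_X,T_X$ where the inductive hypothesis is already available, and it is what avoids the circularity of trying to read off continuity of $f$ directly from continuity of $M\otimes f$.
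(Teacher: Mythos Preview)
Your argument is correct and follows a genuinely different route from the paper's. The paper fixes, for a given $\epsilon$, a single level $q$ with $2^{-q}<\epsilon/4$, approximates $f$ uniformly by $\theta_q$ (Lemma~\ref{cauchySeq}), and then compares $\theta_q(x)$ with $\theta_q(y)$ inside $M^q\otimes X$ by way of the continuous iterate $e^q=(M^{q-1}\otimes e)\circ\cdots\circ e$, absorbing the discrepancy between $e^q(\cdot)$ and its projection $\theta_q(\cdot)$ via Lemma~\ref{totalybound}. You instead work directly from the fixed-point identity $f=s\circ(M\otimes f)\circ e$ and induct on the tolerance range $\epsilon>2^{-k}$, gaining a factor $\tfrac12$ at each step from Lemma~\ref{LemmaMetricOnTensor}. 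Your case split at $e(y)$ is sound: when $e(y)=m\otimes y_1$ is not a gluing point, the distance formula shows that a ball of radius $\tfrac12\min\{d(y_1,D):D\text{ glued in copy }m\}>0$ around it is represented entirely in copy $m$; at a gluing point the $\tfrac12$-ball meets only the two adjacent copies, and preservation of the distinguished points by $f$ makes the two representations of $(M\otimes f)(e(y))$ agree. The paper's approach ties continuity to the explicit $\theta_q$-construction of $f$ and is slightly more quantitative (one global $q$ per $\epsilon$); yours is more intrinsically coalgebraic and makes the roles of the contraction factor and of one-boundedness (your base case $P(0)$) very transparent.

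Your opening observation is also correct and in fact catches a slip in the paper: each $\theta_n$ has image in the finite set $C_n(M^n\otimes I)\subset G$, so on a connected carrier such as $\s$ it cannot be continuous, contrary to the Remark placed immediately after the proof of Proposition~\ref{contOff}. The paper's proof itself never uses continuity of the individual $\theta_n$ (only of $e^q$), so the main argument stands; but you are right that a naive ``uniform limit of continuous functions'' shortcut is not available here.
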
 

\begin{proof}
We will prove the continuity of $f$ at $x \in X$. Let $\,\epsilon >0\,$ be arbitrary. From Lemma \ref{cauchySeq}, there is  $N=N(\epsilon)$ such that for all $y\in X$ and $p\geq q> N$ $d_S(\theta_p(y), \theta_q(y))<\epsilon/8$. By taking the limit as $p\rightarrow \infty$ we have $$d_S(f(y), \theta_q(y))<\dfrac{\epsilon}{8}$$ for all $y\in X$ and $q>N$.\\

Let $q$ be such that $q>N$ and $\frac{1}{2^q}<\frac{\epsilon}{4}$.
Since $q>N$, it follows that for any $y\in X$
\begin{align*}
    d_S(f(x),f(y)) &\leq d_S(f(x), \theta_q(x))+d_S(\theta_q(x), \theta_q(y))+d_S(\theta_q(y), f(y))\\
    &< d_S(\theta_q(x), \theta_q(y)) +\dfrac{\epsilon}{4}.
\end{align*} 

Let $\,e^q=(M^{q-1}\otimes e)\circ \cdots \circ (M\otimes e)\circ e$ (similar to $e_s$ in (\ref{ek}) for an algebra). Because $e^q$ is continuous at $x$, there is $\delta=\delta(\epsilon) >0$ such that 
 $~d_{M^{q}\otimes X }(e^{q}(x),e^q(y))<\frac{\epsilon}{4}$ whenever $d(x,y)<\delta$.\\
 
Let $y\in X$ such that $d(x,y)<\delta$. Consider the two sequences $\left(\theta_k(x)\right)$ and $\left(\theta_p(y)\right)$. If $e^q(x)=m_{1}\otimes \cdots m_{q}\otimes {x_{q}}$ and $e^q(y)=n_{1}\otimes \cdots n_{q}\otimes {y_{q}}$, then $\displaystyle \theta_q(x)= \left[ m_{1}\otimes \cdots m_{q}\otimes \overline{x_{q}} \right] $ and $\displaystyle \theta_q(y)= \left[n_{1}\otimes \cdots n_{q}\otimes \overline{y_{q}} \right]$. We now have,
\begin{eqnarray*}
d_{S}(f(x),f(y)) & < \dfrac{\epsilon}{4}~~  + & d_{S}\left([m_{1}\otimes \cdots m_{q}\otimes \overline{x_{q}}],[n_{1}\otimes \cdots n_{q}\otimes \overline{y_{q}}]\right) \\
 & = \dfrac{\epsilon}{4}~~  + & d_{M^{q}\otimes I}\{m_{1}\otimes \cdots m_{q}\otimes \overline{x_{q}},n_{1}\otimes \cdots n_{q}\otimes \overline{y_{q}}\}\\ 
 & &~~~~~~~~~\lceil~\because \, M^{q}\otimes I\xrightarrow{C_q}G \textrm{ is an isometric embedding }\rfloor\\
& =\dfrac{\epsilon}{4}~~  + & d_{M^{q}\otimes X}\{m_{1}\otimes \cdots m_{q}\otimes \overline{x_{q}},n_{1}\otimes \cdots n_{q}\otimes \overline{y_{q}}\}\\ 
 & &\lceil\because \, M^{q}\otimes I\xrightarrow{M^{q}\otimes \eta} M^{q}\otimes X\textrm{is an isometric embedding}\rfloor\\
 &\leq \dfrac{\epsilon}{4}~~  + & d_{M^{q}\otimes X}\{m_{1}\otimes \cdots m_{q}\otimes \overline{x_{q}},m_{1}\otimes \cdots m_{q}\otimes x_{q}\}\\
 & & + ~~ d_{M^{q}\otimes X}\{m_{1}\otimes \cdots m_{q}\otimes x_{q},n_{1}\otimes \cdots n_{q}\otimes y_{q}\}\\
 & & + ~~ d_{M^{q}\otimes X}\{n_{1}\otimes \cdots n_{q}\otimes y_{q},n_{1}\otimes \cdots n_{q}\otimes  \overline{y_{q}}\}\\
 & \leq  \dfrac{\epsilon}{4}~~  + & \frac{1}{2^{q}}+d_{M^{q}\otimes X}\{e^{q}(x),e^{q}(y)\}+\frac{1}{2^{q}}~~~~~~~~\lceil~\because \, \textrm{ Lemma } \ref{totalybound}~\rfloor\\ 
 & <  \dfrac{\epsilon}{4} ~~  + &\frac{\epsilon}{4}~~+~~\frac{\epsilon}{4}~~+~~\frac{\epsilon}{4}~~=~~\epsilon
\end{eqnarray*}

Thus, we have the continuity of $f$ at $x$, completing the proof.	 
\end{proof}

\begin{remark}
\begin{enumerate}
    \item As $S$ is complete, it follows from Remark \ref{uniCauchy}, that the sequence of functions $(\theta_n)$ converge uniformly to the function $f$
    \item Each $\theta_n$ is continuous. One can extract a proof of this from the proof of Proposition \ref{contOff}.
    \item Then, it follows that $f$ is continuous as the uniform limit of continuous functions is continuous.
\end{enumerate}
\end{remark}

The following result is a straight forward implication of Proposition \ref{contOff}.

\begin{theorem}
$(S, \psi)$ is the final coalgebra for $F$ on  $\mc$.
\end{theorem}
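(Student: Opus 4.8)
The plan is to assemble this final theorem from the pieces already established, since almost all the work has been done. To show that $(S, \psi)$ is the final coalgebra for $F$ on $\mc$, I must verify two things: first, that $(S, \psi)$ is genuinely an object of the appropriate category (a coalgebra in $\mc$), and second, that for every $F$-coalgebra $(X, e)$ on $\mc$ there exists a \emph{unique} morphism in $\mc$ from $(X, e)$ to $(S, \psi)$ making the square \eqref{comSquareMead} commute. The first point is immediate: $\psi = s^{-1}$ is an isometric isomorphism in $\mc$, hence continuous, so $(S, \psi)$ is indeed an $F$-coalgebra on $\mc$.

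For the existence of the mediating morphism, I would take the map $f : X \to S$ given by $f(x) = \lim_{k\to\infty} \theta_k(x)$, exactly as in Theorem \ref{finOnSet}. By Proposition \ref{defOfMediatingMor}, this $f$ is a morphism in $\set$ that makes \eqref{comSquareMead} commute. The new ingredient supplied by the metric setting is Proposition \ref{contOff}: since $(X, e)$ is a coalgebra on $\mc$, the structure map $e$ is continuous, and therefore $f$ is continuous. A continuous map preserving the distinguished points is precisely a morphism in $\mc$, so $f$ is a valid $\mc$-morphism of coalgebras. This establishes existence.

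For uniqueness, I would argue that any $\mc$-morphism $h : X \to S$ making \eqref{comSquareMead} commute is in particular a $\set$-morphism making the same square commute, because $\mc$ sits inside $\set$ via the forgetful functor and the commuting square is the same diagram of underlying functions. By the uniqueness clause of Proposition \ref{defOfMediatingMor} (equivalently Theorem \ref{finOnSet}), there is only one such $\set$-morphism, namely $f$. Hence $h = f$, and uniqueness in $\mc$ follows a fortiori from uniqueness in the larger category $\set$.

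The proof is essentially a bookkeeping assembly, so there is no genuine obstacle remaining; the real content was already discharged in Proposition \ref{contOff}, whose proof carefully controlled the triangle-inequality estimate via the uniform Cauchy property (Lemma \ref{cauchySeq}) and the total-boundedness bound (Lemma \ref{totalybound}). The only point demanding a word of care is the logical direction of the uniqueness argument: one must note that finality in $\mc$ does not require a separate uniqueness proof, since every competitor in $\mc$ is automatically a competitor in $\set$, where uniqueness is already known. I would state this observation explicitly rather than re-deriving uniqueness from scratch.
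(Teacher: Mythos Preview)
Your proposal is correct and follows exactly the approach the paper takes: the paper simply states that the theorem is a ``straight forward implication of Proposition \ref{contOff}'', and your write-up spells out precisely the assembly the authors leave implicit---existence of $f$ from Proposition \ref{defOfMediatingMor}, continuity from Proposition \ref{contOff}, and uniqueness inherited from the $\set$ setting via the forgetful functor. There is nothing to add.
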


Recall from Example \ref{exmpSerpS} that $\s$ is isomorphic to $S$ as $F$-coalgebras on $\mc$. Thus we have the following theorem, which is one of the main claims of this paper.

\begin{theorem}
\label{SisFinalinMC}
$(\s, \sigma)$ is the final coalgebra for $F$ on $\mc$
\end{theorem}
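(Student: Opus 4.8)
The final statement to prove is Theorem \ref{SisFinalinMC}: that $(\s, \sigma)$ is the final coalgebra for $F$ on $\mc$. Looking at the structure, this is meant to be an almost immediate corollary of two facts established just before it: the theorem that $(S,\psi)$ is the final coalgebra for $F$ on $\mc$, and Example \ref{exmpSerpS}, which records that $(\s,\sigma)$ and $(S,\psi)$ are isomorphic as $F$-coalgebras over $\mc$.

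The plan is to invoke the general categorical principle that finality is preserved under isomorphism of coalgebras. First I would recall that the immediately preceding theorem establishes $(S,\psi)$ as final for $F$ on $\mc$, meaning for every $F$-coalgebra $(X,e)$ on $\mc$ there is a unique morphism to $(S,\psi)$. Then I would use Example \ref{exmpSerpS}, which gives an isomorphism of $F$-coalgebras $\phi:(S,\psi)\to(\s,\sigma)$ in $\mc$ (the bi-Lipschitz isomorphism, which in particular is a continuous isomorphism with continuous inverse, hence an iso in $\mc$). Given any coalgebra $(X,e)$ on $\mc$, I would compose the unique mediating morphism $f:X\to S$ with $\phi$ to obtain a morphism $\phi\circ f:X\to\s$, and check it is a coalgebra morphism into $(\s,\sigma)$.

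For the verification, I would note that $f$ being a coalgebra morphism means $\psi\circ f=(M\otimes f)\circ e$, and $\phi$ being a coalgebra morphism means $\sigma\circ\phi=(M\otimes\phi)\circ\psi$; chaining these and using functoriality of $M\otimes-$ gives $\sigma\circ(\phi\circ f)=(M\otimes(\phi\circ f))\circ e$, so $\phi\circ f$ is indeed a morphism from $(X,e)$ to $(\s,\sigma)$ in $\mc$. For uniqueness, given any coalgebra morphism $h:X\to\s$, the composite $\phi^{-1}\circ h$ is a coalgebra morphism $X\to S$, which must equal $f$ by finality of $S$; applying $\phi$ shows $h=\phi\circ f$. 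Thus the mediating morphism into $\s$ exists and is unique, establishing finality.

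I do not expect any genuine obstacle here, since all the substantive work (continuity of the mediating morphism, the Cauchy-completion description, and the coalgebra isomorphism with $\s$) has already been done in the preceding propositions and examples. The only point requiring minor care is confirming that the isomorphism of Example \ref{exmpSerpS} is genuinely an isomorphism in $\mc$ and not merely in $\set$ — but the example explicitly asserts the bi-Lipschitz isomorphism holds over $\mc$, and a bi-Lipschitz map is continuous with continuous inverse, so it is an isomorphism in $\mc$. Hence the theorem follows formally, and the proof can be stated in just a few lines.
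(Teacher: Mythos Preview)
Your proposal is correct and follows exactly the same approach as the paper: the paper simply observes, immediately before stating the theorem, that $(\s,\sigma)$ is isomorphic to $(S,\psi)$ as $F$-coalgebras on $\mc$ (Example~\ref{exmpSerpS}) and that $(S,\psi)$ is final, and then states Theorem~\ref{SisFinalinMC} without further argument. Your version is in fact more explicit than the paper's, spelling out the standard transfer-of-finality-along-an-isomorphism argument that the paper leaves implicit.
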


\subsection{Final coalgebra for $F$ on $\ms$}

It has been observed in \cite{Bhat} that $(S, \psi)$ is the final coalgebra for $F$ on $\ms$. Our main goal here is to see how this observation follows from the above discussion. We need to show that if $(X, e)$ is a coalgebra with $e$ being a short map, then the mediating morphism $f:X\rightarrow S$ is also a short map. This follows from the proof of Proposition \ref{contOff} with some minor modifications.

\begin{proposition}
\label{shortOff}
If $(X, e)$ is a coalgebra for $F$ on $\ms$ (i.e., $e$ is a short map), then the mediating morphism $~f:X\rightarrow S$ is also a short map.
\end{proposition}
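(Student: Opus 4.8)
The plan is to adapt the proof of Proposition \ref{contOff} to the short-map setting, keeping the same skeleton but replacing every continuity estimate by the corresponding metric inequality. Recall that $f$ is a short map precisely when $d_S(f(x),f(y))\le d_X(x,y)$ for all $x,y\in X$. Since $f$ is the uniform limit of the functions $\theta_n$ (Remark following Proposition \ref{contOff}), it suffices to obtain, for every $x,y\in X$, a bound of the form $d_S(\theta_q(x),\theta_q(y))\le d_X(x,y)+\varepsilon_q$ with $\varepsilon_q\to 0$, and then let $q\to\infty$.

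First I would fix $x,y\in X$ and $q\in\nn$ and estimate $d_S(\theta_q(x),\theta_q(y))$ directly, without introducing the auxiliary $\varepsilon/4$ splitting that was needed for continuity. Writing $e^q(x)=m_1\otimes\cdots\otimes m_q\otimes x_q$ and $e^q(y)=n_1\otimes\cdots\otimes n_q\otimes y_q$, so that $\theta_q(x)=[m_1\otimes\cdots\otimes m_q\otimes\overline{x_q}]$ and $\theta_q(y)=[n_1\otimes\cdots\otimes n_q\otimes\overline{y_q}]$, I would use that $M^q\otimes I \xrightarrow{C_q} G$ and $M^q\otimes I\xrightarrow{M^q\otimes\eta}M^q\otimes X$ are isometric embeddings (exactly as in Proposition \ref{contOff}) to transport the distance from $S$ into $M^q\otimes X$. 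The triangle inequality together with Lemma \ref{totalybound} then gives
\[
d_S(\theta_q(x),\theta_q(y)) \le \tfrac{1}{2^{q}} + d_{M^q\otimes X}(e^q(x),e^q(y)) + \tfrac{1}{2^{q}}.
\]
Here is where being short, rather than merely continuous, pays off: $e$ is short, so by part (\ref{shrt}) of Lemma \ref{presFunctions} each $M^{k}\otimes e$ is short, and hence the composite $e^q$ is short. Therefore $d_{M^q\otimes X}(e^q(x),e^q(y))\le d_X(x,y)$, which yields
\[
d_S(\theta_q(x),\theta_q(y)) \le d_X(x,y) + \tfrac{1}{2^{q-1}}.
\]

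Finally I would pass to the limit. Since $f(x)=\lim_q\theta_q(x)$ and $f(y)=\lim_q\theta_q(y)$ in the complete space $S$, taking $q\to\infty$ in the displayed inequality gives $d_S(f(x),f(y))\le d_X(x,y)$, so $f$ is short. The main obstacle, such as it is, lies in justifying the interchange of limit and metric in this last step; this is routine once one notes that $d_S$ is continuous (indeed $1$-Lipschitz) in each argument, so $d_S(f(x),f(y))=\lim_q d_S(\theta_q(x),\theta_q(y))$. I expect no genuine difficulty beyond bookkeeping, since the short-map case is in fact \emph{cleaner} than the continuous case: the uniform bound of Lemma \ref{cauchySeq} and the isometric-embedding identifications are already in place, and the one substantive change is simply replacing the continuity estimate $d_{M^q\otimes X}(e^q(x),e^q(y))<\varepsilon/4$ by the sharp short inequality $d_{M^q\otimes X}(e^q(x),e^q(y))\le d_X(x,y)$.
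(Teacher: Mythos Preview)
Your proof is correct and follows essentially the same route as the paper's: both use the isometric embeddings $C_q$ and $M^q\otimes\eta$ together with Lemma~\ref{totalybound} to obtain $d_S(\theta_q(x),\theta_q(y))\le d_{M^q\otimes X}(e^q(x),e^q(y))+2^{1-q}$, then invoke shortness of $e^q$ (via Lemma~\ref{presFunctions}) to bound this by $d_X(x,y)+2^{1-q}$. The only cosmetic difference is that the paper phrases the final step as an $\varepsilon/3$ argument while you take the limit $q\to\infty$ directly; your version is if anything slightly cleaner.
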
 

\begin{proof}
We give the required modifications to the proof of Proposition \ref{contOff}.

Let $x,y\in X$. Let $\,\epsilon >0\,$ be arbitrary. Then there is some $q$ such that $2^{-q}<\frac{\epsilon}{3}$ and $ d_S(f(x),f(y)) < d_S(\theta_q(x), \theta_q(y)) +\frac{\epsilon}{3}$.

Note that $\,e^q=(M^{q-1}\otimes e)\circ \cdots \circ (M\otimes e)\circ e$ is also a short map (Lemma \ref{presFunctions}) and hence  $~d_{M^{q}\otimes X }(e^{q}(x),e^q(y))<d(x,y)$.\\
 
Consider the two sequences $\left(\theta_q(x)\right)$ and $\left(\theta_q(y)\right)$. If $e^q(x)=m_{1}\otimes \cdots m_{q}\otimes {x_{q}}$ and $e^q(y)=n_{1}\otimes \cdots n_{q}\otimes {y_{q}}$, then $\displaystyle \theta_q(x)= \left[ m_{1}\otimes \cdots m_{q}\otimes \overline{x_{q}} \right] $ and $\displaystyle \theta_q(y)= \left[n_{1}\otimes \cdots n_{q}\otimes \overline{y_{q}} \right]$. As in Proposition \ref{contOff}, we now have,
\begin{eqnarray*}
d_{S}(f(x),f(y)) & \leq  \dfrac{\epsilon}{3}~~  + & \frac{1}{2^{q}}+d_{M^{q}\otimes X}\{e^{q}(x),e^{q}(y)\}+\frac{1}{2^{q}}\\ 
 & <  \dfrac{\epsilon}{3} ~~  + &\frac{\epsilon}{3}~~+~~d(x,y)~~+~~\frac{\epsilon}{3}~~=~~\epsilon~~+~~d(x,y).
\end{eqnarray*}

Since $\epsilon$ is arbitrary, $d_{S}(f(x),f(y))\leq d(x,y)$. Hence, $f$ is a short map. 	 
\end{proof}

\begin{theorem}[Theorem 16 of \cite{Bhat}]
$(S, \psi)$ is the final coalgebra for $F$ on  $\ms$.
\end{theorem}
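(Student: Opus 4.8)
The plan is to deduce the finality of $(S,\psi)$ in $\ms$ directly from the machinery already assembled, treating $\ms$ as a subcategory of the larger categories we have already handled. Since every short map is continuous, any $F$-coalgebra $(X,e)$ on $\ms$ is in particular an $F$-coalgebra on $\mc$ (and on $\set$). Finality requires two things: existence of a mediating morphism $f:X\to S$ in $\ms$ satisfying the commuting square \eqref{comSquareMead}, and its uniqueness among morphisms of $\ms$.

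For existence, I would invoke the canonical construction of the previous section. Given $(X,e)$ with $e$ short, the function $f(x)=\lim_{k\to\infty}\theta_k(x)$ is well defined (Lemma \ref{cauchySeq} guarantees each $(\theta_n(x))$ is Cauchy in $G$, hence converges in the complete space $S$). By Proposition \ref{defOfMediatingMor}, this $f$ is a morphism in $\set$ making \eqref{comSquareMead} commute. The essential additional input is that $f$ is in fact a short map, which is precisely the content of Proposition \ref{shortOff}: the short-map hypothesis on $e$ upgrades to the bound $d_S(f(x),f(y))\le d(x,y)$. Thus $f$ is a legitimate morphism of $\ms$, and the square commutes in $\ms$.

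For uniqueness, suppose $f'$ is any morphism of $\ms$ making the analogous square commute. Then $f'$ is in particular a morphism of $\set$ satisfying \eqref{comSquareMead}, so by the uniqueness clause already established in Proposition \ref{defOfMediatingMor} (equivalently Theorem \ref{finOnSet}) we must have $f'=f$ as set maps, hence as morphisms of $\ms$. This reduces the uniqueness in $\ms$ to uniqueness in the coarser setting $\set$, which we already possess.

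I expect no genuine obstacle here, since all the hard analytic work has been front-loaded into Proposition \ref{shortOff} and Proposition \ref{defOfMediatingMor}; the only point demanding a little care is the logical bookkeeping that lets us borrow uniqueness from $\set$. The subtlety worth flagging is that finality is a statement internal to $\ms$: one must confirm that both the mediating morphism and the competing morphisms live in $\ms$ and that no short-map competitor can evade the set-level uniqueness. Because $\ms$ is a (non-full, but faithful) subcategory of $\set$ and the forgetful functor is faithful, equality of the underlying set maps forces equality in $\ms$, so the argument closes cleanly.
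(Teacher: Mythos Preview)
Your proposal is correct and mirrors the paper's approach exactly: the theorem is presented there as an immediate consequence of Proposition~\ref{shortOff} (existence of a short mediating morphism) together with the uniqueness already secured at the $\set$ level in Proposition~\ref{defOfMediatingMor}/Theorem~\ref{finOnSet}. Your explicit remark about borrowing uniqueness from $\set$ via faithfulness of the forgetful functor just makes overt what the paper leaves implicit.
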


However, as the unique coalgebra map from $(S, \psi)$ to $(\s, \sigma)$ is not short (and only Lipschitz) it follows that $(\s, \sigma)$ is not the final coalgebra for $F$ on $\ms$


\section{Examples} 
\label{examples}

\begin{example}$(G,g)$ is not the initial algebra in $\mc$. \emph{Consider $I=\{T,L,R\}$, the initial object in $\mc$, and the algebra $e:M\otimes I\rightarrow I$ given by $e(a\otimes T)=T$, $e(a\otimes L=b\otimes T)=e(b\otimes L)=L$ and $e(b\otimes R =c\otimes L)=e(a\otimes R=c\otimes T)=e(c\otimes R)=R$.
} \end{example}

Clearly $~e~$ is Lipschitz as it is a function from a finite metric space and hence it is continuous. As $(G,g)$ is the initial algebra in $\set$, there is a unique function $h:G\rightarrow I$ in $\set$ such that $e\circ (M\otimes h)=h\circ g$.\\

Let $b^n\otimes d=b\otimes \cdots \otimes b\otimes d \in M^{n}\otimes I$, where $b$ occurs $n$ times and $d\in I$. We can compute the image of $[b^n\otimes d]$ under $h$ by induction.\\

For example, consider $[b^n\otimes R]\in G$. When $n=1$,
\begin{multline*}
      h\left(\left[b\otimes R\right]\right)=h\left(g\left(b\otimes\left[ R \right]\right)\right) = e\left(\left(M\otimes h\right)\left(b\otimes \left[R\right]\right)\right)  =e\left(b\otimes R\right)=R.
\end{multline*}
For $n>1$, assuming $h\left(\left[b^{n-1}\otimes R\right]\right)=R$, we have,
\begin{multline*}
    h\left(\left[b^n\otimes R\right]\right)=h\left(g\left(b\otimes\left[ b^{n-1}\otimes R \right]\right)\right) \\
    = e\left(\left(M\otimes h\right)\left(b\otimes \left[ b^{n-1}\otimes R \right]\right)\right)=e\left(b\otimes R\right)=R.
\end{multline*}
Thus, for every $n\in \nn$, $h\left(\left[b^n\otimes R\right]\right)=R$.\\

Now consider $b^n\otimes L$. In this case, we may do without induction. Note that in the initial chain (\ref{initialChain}), $!(L)=b\otimes L$. Thus,  $(M^{n}\otimes !)(b^n\otimes L)=b^{n+1}\otimes L$.  Therefore, $[b^n\otimes L]=[L]\in G$. In particular, $h\left([b^n\otimes L]\right)=h\left([L]\right)=L$.\\

It now follows from Lemma \ref{totalybound} that 
$$d_G\left(\left[b^n\otimes R \right],\left[ L \right] \right)=d_G\left(\left[b^n\otimes R \right],\left[ b^n\otimes L \right] \right) \leq 2^{-n}.$$
However, $$ d\left( h\left(\left[b^n\otimes R \right]\right),h\left(\left[ L \right] \right) \right) = d\left(R,L\right)=1 $$
Thus, $h$ is not continuous at $[L]$. Hence, $(G, g)$ cannot be the initial algebra for $F$ in $\mc$. We do not know whether the initial algebra for $F$ on $\mc$ exist or not.

\begin{example}$(S,\psi)$ is not the final coalgebra in the Lipschitz setting.
\emph{
Consider the tri-pointed set $\triangle=\{(x,0) ~:~ x\in [0~,~1]\}\cup\{(\frac{1}{2},  \frac{\sqrt{3}}{2}) \}$, whose distinguished elements are $T_{\triangle}=(\frac{1}{2},  \frac{\sqrt{3}}{2})$, $L_{\triangle}=(0,  0)$ and $R_{\triangle}=(1,  0)$, and whose metric is the euclidean metric on $\mathbb{R}^{2}$.}
\end{example}
Let $~e^{'}:\triangle\longrightarrow M\otimes \triangle~$ be given by 
$$e^{'}(x,y)=\left\{%
\begin{array}{lllll}
a\otimes (\frac{1}{2},  \frac{\sqrt{3}}{2}), &  \hbox{$(x,y)=(\frac{1}{2},  \frac{\sqrt{3}}{2})$;}\\\\
b\otimes (0,0), & \hbox{$x\in [0~\frac{1}{4}] ~\&~ y=0 $;} \\\\
b\otimes (4x-1,0), & \hbox{$x\in [\frac{1}{4}~\frac{1}{2}]~\&~ y=0 $;}\\\\
c\otimes (4x-2,0), & \hbox{$x\in [\frac{1}{2}~\frac{3}{4}]~\&~ y=0 $;}\\\\
c\otimes (1,0), & \hbox{$x\in [\frac{3}{4}~1]~\&~ y=0$.}
\end{array}%
\right.$$

Then, $~e^{'}~$ is a Lipschitz map with Lipschitz constant 2 and hence  $~(e^{'},\triangle)~$ is an $F$-coalgebra in $\mc$.\\ 

As $(\s, \sigma)$ is the final coalgebra for $F$ on $\mc$ (Theorem \ref{SisFinalinMC}), there is a unique continuous map $f:(\triangle~,~e')\rightarrow (\s,\sigma)~$ such that the following diagram commutes.
\begin{equation*}
\begin{tikzcd}
\triangle \arrow{d}{f} \arrow{r}{e'} & M\otimes \triangle \arrow{d}{M\otimes f}\\
\s \arrow{r}{\sigma}  &  M\otimes\s
\end{tikzcd}     
\end{equation*}
Then, it follows that $~f~$ must satisfy the following conditions.\\\\
$~f(x,0)=0 , ~ x \in [0~\frac{1}{4}]~~,~~f(x,0)=1,~x\in [\frac{3}{4}~1]~$ and $$f(x,0)=\left\{%
\begin{array}{ll}
\displaystyle\frac{f(4x-1,0)}{2}, & \hbox{$x\in [\frac{1}{4}~\frac{1}{2}] $;} \\\\
\displaystyle\frac{1+f(4x-2,0)}{2}, & \hbox{$x\in [\frac{3}{4}~1]$.}
\end{array}%
\right.$$
Define the following families of intervals, where $n \in \nn$. 
\begin{align*}
I_n & = \left[\displaystyle \frac{1}{4}~,~\displaystyle\frac{1}{4}+\cdots+\frac{1}{4^{n}}\right]\\ 
J_n & = \left[\displaystyle \frac{1}{4}+\cdots+\frac{1}{4^{n}}+\frac{3}{4^{n+1}}~,~\displaystyle\frac{1}{4}+\cdots+\frac{1}{4^{n}}+\frac{4}{4^{n+1}}\right]
\end{align*}
Then, it can be shown by induction that $f$ satisfy the following properties (We leave the verification to the reader).
\begin{align*}
	f(x,0)&=0~, x\in I_{n}~,\forall~n\in\mathbb{N}\\
	f(x,0)&=\displaystyle\frac{1}{2^{n}}~,x\in J_{n}~,\forall~n\in\mathbb{N}
\end{align*}
 With these two properties, we can now show that $f$ is not Lipschitz. Suppose to the contrary that $f$ is Lipschitz. Then we have some $k>0$ such that $~d(f(x,0),f(y,0))\leq k\cdot d\left((x,0),(y,0)\right),\,\,~\forall\, x,y \in [0,1]$.  Choose $\displaystyle x= \frac{1}{4}+\cdots+\frac{1}{4^{n}}+\frac{1}{4^{n+1}}~$ and $\displaystyle y=\frac{1}{4}+\cdots+\frac{1}{4^{n}}+\frac{3}{4^{n+1}}$. Then $f(x,0)=0$ and $\displaystyle f(y,0)= \frac{1}{2^{n}}$. From the Lipschitz condition, we have $\displaystyle \frac{1}{2^{n}}\leq k \frac{2}{4^{n+1}}, ~\forall\, n\in \mathbb{N}$; which implies 
 $k\geq 2.2^{n}, ~\forall\, n\in \mathbb{N}$. Hence $k$ is not bounded, which is a contradiction. Therefore $f$ is not Lipschitz.\\
 
 Hence, neither $(S,\psi)$ nor $(\mathbb{S},\sigma)$ can be the final coalgebra for $F$ in the Lipschitz setting. It was questioned in \cite{Bhat}, whether $(S,\psi)$, the final coalgebra for $F$ on $\ms$ is the final coalgebra for $F$ on $\ml$. This example shows that it is not the case. 


 \section*{Acknowledgement}
 The authors respectfully acknowledge the guidance and resourcefulness of  Professor Lawrence S. Moss.



\section*{References}
\bibliographystyle{model1-num-names}

\end{document}